\numberwithin{equation}{section}
\theoremstyle{plain}
        \newtheorem{theorem}[equation]{Theorem}
        \newtheorem{lemma}[equation]{Lemma}
        \newtheorem{proposition}[equation]{Proposition}
        \newtheorem{corollary}[equation]{Corollary}
	    \newtheorem{definition}[equation]{Definition}
\theoremstyle{definition}
        \newtheorem{notation}[equation]{Notation}
        \newtheorem*{observation}{Observation}
\newcommand{\mr}[1]{\buildrel {#1} \over \longrightarrow}
\newcommand{\ml}[1]{\buildrel {#1} \over \longleftarrow}
\newcommand{\Mr}[1]{\buildrel {#1} \over \Longrightarrow}
\newcommand{\Ml}[1]{\buildrel {#1} \over \Longleftarrow}
\newcommand{\Eop}[1]{\cc{E}ns^{\cc{#1}^{op}}}
\newcommand{\E}[1]{\cc{E}ns^{\cc{#1}}}
\newcommand{\pmr}[2]
{
\xymatrix@C=5ex@R=2.4ex
         {
          {} \ar@<1.6ex>[r]^{#1} 
	         \ar@<-1.1ex>[r]^{#2} & {}
         }
}
\newcommand{\pml}[2]
{
\xymatrix@C=5ex@R=2.4ex
         {
            {} 
          & {} \ar@<1.0ex>[l]_{#1} 
	           \ar@<-1.7ex>[l]_{#2}
         }
}
\newcommand{\pmrl}[2]
{
\xymatrix@C=5ex@R=2.4ex
         {
            {} \ar@<1.6ex>[r]^{#1} 
          & {} \ar@<1ex>[l]_{#2}
         }
}
\newcommand{\pmdu}[2]
{
\xymatrix@C=2.4ex@R=5ex
         {
            {} \ar@<1.6ex>[d]^{#1} 
         \\ {} \ar@<1ex>[u]_{#2}
         }
}
\newcommand{\cellr}[3]
{
\xymatrix@C=7ex@R=2.4ex
         {
          {} \ar@<1.6ex>[r]^{#1} 
          \ar@{}@<-1.3ex>[r]^{\!\! #2 \, \!\Downarrow}
                                         \ar@<-1.1ex>[r]_{#3} & {}
         }
}
\newcommand{\celll}[3]
{
\xymatrix@C=7ex@R=2.4ex
         {
            {} 
          & {} \ar@<1.0ex>[l]^{#1} 
          \ar@{}@<-1.7ex>[l]^{\!\! #2 \, \!\Downarrow}
	                                 \ar@<-1.7ex>[l]_{#3}
         }
}
\newcommand{\limite}[2]{\underrightarrow{lim}_{\; #1} {\; #2}}
\newcommand{\colimite}[2]{\underrightarrow{colim}_{\; #1} {\; #2}}
\newcommand{\Colim}[1]{\underrightarrow{\cc{L}im}{\; #1}}
\newcommand{\Lim}[1]{\underleftarrow{\cc{L}im}{\; #1}}
\newcommand{\cc}{\mathcal}
\newcommand{\bb}{\mathbb}
\newcommand{\ff}{\mathsf}
\newcommand{\nn}{\mathbf}
\newcommand{\rr}{\mathrm}
\newcommand{\lm}{\ell}
\newcommand{\im}{\imath}
\newcommand{\jm}{\jmath}
\newcommand{\mmr}[1]{\buildrel {#1} \over \hookrightarrow}
\newcommand{\cqd}{\hfill$\Box$}
\newcommand{\ain}{\;\widetilde{\in}\;}
\begin{document}

\title{From Yoneda to Topoi morphisms}

\author{Eduardo J. Dubuc}


\begin{abstract}
 In this note we show how two fundamental results in Topos theory follow by repeated use of Yoneda's Lemma, the formalism of natural transformations and very basic category theory. 
 
 In Lemma \ref{yoneda6}, we show the fundamental result SGA4 EXPOSE IV Proposition 4.9.4, which says that for any site $\cc{C}$, the canonical functor $\cc{C} \mr{\varepsilon} Sh(\cc{C})$ into the category of sheaves, classifies sites morphisms $\cc{C} \mr{} \cc{Z}$ into any topos $\cc{Z}$. After the usual Yoneda's Lemma, Lemma \ref{yoneda1}, we show that the  Yoneda functor $\cc{C} \mr{h} \Eop{C}$ classifies functors 
$\cc{C} \mr{} \cc{Z}$ into any cocomplete category $\cc{Z}$, via a cocontinuos  extension $\Eop{C} \mr{} \cc{Z}$, Lemma \ref{yoneda3}. Then
we reach Lemma \ref{yoneda6} by an step by step enrichment of \ref{yoneda3}. All we use is Yoneda's Lemma, over and over again, and the Yoga of natural transformations. In Lemma \ref{yoneda7} we show the equivalence between \emph{flatness}  and  \emph{left exactness} for functors from finitely complete categories into any topos. Our proof is elementary, we show how basic exactness \mbox{properties} of sets prove the result for set valued functors, then we generalize to functors valued in any topos utilizing results of the previous sections and the Yoga of natural transformations. Besides the thread we \mbox{follow,} nothing here is new, although we haven't seen \ref{yoneda7} proved this way before.

\end{abstract}

\maketitle

\section{Introduction}  \label{intro}
This paper gained its impetus from a talk amongst Matias del Hoyo and the author, we are very grateful to Matias' input.
He suggested that the three Yoneda's lemmas discussed  in my category theory course concerning the Yoneda functor 
$\cc{C} \mr{h} \Eop{C}$:

\vspace{1ex}  

Yoneda I: Yoneda's lemma, 

Yoneda II: Density of the functor $h$, 

Yoneda III: The functor $h$ classifies functors 
$\cc{C} \mr{p} \cc{Z}$ into any cocomplete category 
$\cc{Z}$, via a cocontinuos  extension 
$\Eop{C} \mr{\widetilde{p}} \cc{Z}$.

\vspace{1ex}

 can be followed by another three:  

\vspace{1ex}

Yoneda IV: The functor $h$ in Yoneda III classifies functors 
$\cc{C} \mr{p} \cc{Z}$ into any cocomplete category 
$\cc{Z}$, via a cocontinuos  extension $\widetilde{p}\,$  furnished with an explicit right adjoint
$\cc{Z} \mr{h_p} \Eop{C}$.
 
Yoneda V: The functor 
$h$ in Yoneda III classifies \emph{flat} functors 
\mbox{$\cc{C} \mr{p} \cc{Z}$} into any cocomplete category 
$\cc{Z}$, via a \emph{left exact} cocontinuos extension
$\widetilde{p}$.
 

Yoneda VI: If $\cc{C}$ is a site, the canonical functor 
$\cc{C} \mr{\varepsilon} Sh(\cc{C})$,
$\varepsilon = a\,h$ (where "a" denotes the associate sheaf functor), classifies site morphisms 
\mbox{$\cc{C} \mr{p} \cc{Z}$} 
into any topos $\cc{Z}$, 
via the restriction of the cocontinous extension 
$\widetilde{p}$ 
to the subcategory of sheaves $Sh(\cc{C}) \subset \Eop{C}$.

\vspace{1ex}

Finally we add as a seventh Yoneda the important fact that in the presence of finite limits on the site, left exactness is a sufficient condition for flatness. 

\vspace{1ex}

Yoneda VII: If a category $\cc{C}$ has finite limits, a functor $\cc{C} \mr{F} \cc{Z}$ into a topos $\cc{Z}$ is flat provided it is exact.

\vspace{1ex}
 
In this note we follow this thread, adding the full explicit  2-categorical aspects of the statements, and complete detailed proofs of them. 
In this way we exhibit how each one follow from the previous ones and just the formal properties (Yoga) of natural transformations. Even to pass from the purely category theory Yoneda V to the topos theory Yoneda VI, we only need the pertinent definitions and an almost tautological arguing. For Yoneda VII we also need a very basic category theory proof that the statement holds for set valued functors, then we show how its generalization to functors valued in any topos follows from the previous Yonedas, we haven't seen this proof before in the literature, as far as we know it is inedit.

\section{Preliminaries on Categories}  \label{cat_prelim}
We recall some category theory definitions that we shall use in order to have explicit and rigorous proofs, and also in this way fix notation 
and terminology. We refer to Grothendieck's theory of universes \cite{G1}, see also \cite{KS}. Let $S$ and $T$ be any two sets, the notation $S \ain T$ means that $S$ is bijective with an element of $T$. 

\vspace{1ex}

Let $\cc{C}$ be a category, $X,\; Y$ any two objects, we denote 
$[X,\; Y]$ the homset of morphisms between $X$ and $Y$. 
Respect to a universe $\cc{U}$ we use the following dictionary: 
$$
\xymatrix@R=0.1ex@C=8ex
    {
     \fbox{$small$-set: $S \ain \cc{U}$}  \ar@{<->}[r]  
   & \fbox{$\cc{U}$-$small$-set}
  \\
     \fbox{locally small category: $[X,\,Y] \ain \cc{U}$}  \ar@{<->}[r]  
   & \fbox{$\cc{U}$-category}
  \\
     \fbox{$small$-category: $\cc{C} \ain \cc{U}$}  \ar@{<->}[r]  
   & \fbox{$\cc{U}$-$small$ category}
  \\
     \fbox{$essentially$-$small$-category}  
                                                     \ar@{<->}[r]  
   & \fbox{equivalent to a small category}
    }
$$
We denote by $\cc{E}ns$ the category of small sets. Given a category 
$\cc{C}$, we denote $\Eop{C}$ the  category of presheaves over $\cc{C}$. Its objects are the functors 
\mbox{$F: \Eop{C} \mr{} \cc{E}ns$,} and its arrows are the natural transformations $\theta:F\mr{} G$. Given a locally small category, the \emph{covariant} Yoneda functor 
$\cc{C} \mr{h} \Eop{C}$ is defined by \mbox{$h_X = [-,\, X]$,} and for $X \mr{f} Y$, $h_f = f_\ast$ $=$ post-composition with $f$. We will often abuse the notation and omit to indicate the label "$h$". When we say that an object $X$ is considered in $\Eop{C}$, we mean $h(X)$ (see \ref{compatibilidad} below).
\vspace{1ex}

\vspace{1ex}

{\bf Convention.} \label{convention}
To improve readability and simplify the language we adopt the following convention: A \emph{category} means a locally small category, a locally small category which is not \emph{small} is called \emph{large}, and categories which are not even locally small are called \emph{illegitimate}. 

\vspace{1ex}

\emph{If $\cc{C}$ is a small category, $\Eop{C}$ is a 
large category, if $\cc{C}$ is a large category, $\Eop{C}$ is an 
illegitimate category} 

\begin{definition} \label{sef}
A family $X_i \mr{\lambda_i} X$ is \emph{strict epimorphic} if for every compatible family $X_i \mr{f_i} Y$, i.e.    
$\forall x, y  \;(\lambda_i x = \lambda_i y \; \Rightarrow f_i x = f_i y)$,
there exists a unique $X \mr{f} Y$ such that $f\,\lambda_i = f_i$, as indicated in the following diagram:
$$ 
\xymatrix@R=2.5ex
        {
         & X_i \ar[dr]^{\lambda_i}
               \ar@/^2ex/[drrr]^{f_i}
        \\
           Z  \ar[ur]^{x}
              \ar[dr]^{y} 
        && X \ar@{-->}[rr]^{f} 
        && Y 
       \\
         & X_j \ar[ur]^{\lambda_j}
               \ar@/_2ex/[urrr]^{f_j}
         }
$$
It is said \emph{universal} if under any change of base it remains strict epimorphic.
\end{definition}

\section{Yoneda I: Yoneda's Lemma}  \label{sec_yoneda1}
%
%
Let $\cc{C}$ be a category: 
\begin{lemma}[Yoneda I] \label{yoneda1}
For any $C \in \cc{C}$ and any functor $\cc{C}^{op} \mr{F} \cc{E}ns$, there is a bijective correspondence $\eta$, natural in the variables $C$ and $F$:
$$
\xymatrix@R=0.1ex@C=0.5ex
   {
     {} & [-,\, C] \ar[rr]^\theta & {}  & F
    \\
     {} \ar@{-}[rrrrr] &&&&& \eta
    \\
     {} & {} & \xi \in FC
   }
\hspace{4ex}
\xymatrix@R=0.1ex 
   {
     {} 
    \\
     Nat([-,\, C],\, F) \mr{\eta} FC 
    \\
     {} 
   }
$$
To a natural transformation $\theta$, we define 
$\eta(\theta) = \theta_C(id_C) \in FC$. Then, 
$$
\forall \, \xi \, \exists \, ! \;\; \theta \;\; | \;\; \xi = \theta_C(id_C). 
$$

\end{lemma}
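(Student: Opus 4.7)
The plan is to exhibit an explicit inverse to the candidate map $\eta(\theta) = \theta_C(\mathrm{id}_C)$, and then verify naturality in both variables. Given $\xi \in FC$, I would define $\theta^\xi : [-,\,C] \to F$ componentwise by $\theta^\xi_X(f) = F(f)(\xi)$ for every object $X \in \cc{C}$ and every $f \in [X,\,C]$. The first routine check is that $\theta^\xi$ is a natural transformation of functors $\cc{C}^{op} \to \cc{E}ns$: for $g: Y \to X$ in $\cc{C}$, both paths around the naturality square applied to $f \in [X,\,C]$ evaluate to $F(fg)(\xi)$ by functoriality of $F$.

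Next I would verify the two round-trip identities. The equality $\eta(\theta^\xi) = \xi$ is immediate, since $\theta^\xi_C(\mathrm{id}_C) = F(\mathrm{id}_C)(\xi) = \xi$. The reverse identity $\theta^{\eta(\theta)} = \theta$ is the only step with genuine content: unfolding the definitions, it asks that for every $f: X \to C$ one has $\theta_X(f) = F(f)(\theta_C(\mathrm{id}_C))$, and this is exactly the naturality square of $\theta$ applied to $f$ (thought of as a morphism from $C$ to $X$ in $\cc{C}^{op}$), evaluated at $\mathrm{id}_C \in [C,\,C]$. This single instance of naturality is the real heart of the lemma; everything else is bookkeeping.

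Finally I would check naturality of $\eta$ in the variables $F$ and $C$. Naturality in $F$ is tautological: for $\alpha: F \to G$, both $\eta(\alpha \circ \theta)$ and $\alpha_C(\eta(\theta))$ unfold to $\alpha_C(\theta_C(\mathrm{id}_C))$. Naturality in $C$ asserts that, given $u: C \to C'$ and $\theta' : [-,\,C'] \to F$, one has $\theta'_C(u) = F(u)(\theta'_{C'}(\mathrm{id}_{C'}))$, which is again the naturality of $\theta'$, this time at the arrow $u$ itself. Thus bijectivity together with both naturalities reduces to unpacking definitions and invoking the naturality of an ambient transformation at an appropriate morphism; I anticipate no substantive obstacle beyond carefully choosing which square to apply where.
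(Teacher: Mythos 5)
Your proposal is correct and follows exactly the route the paper takes: defining the inverse by $\theta^\xi_X(f) = F(f)(\xi)$, observing that naturality of $\theta$ forces this formula, and dispatching the remaining verifications (naturality of $\theta^\xi$, the two round-trips, and naturality of $\eta$ in both variables) as routine checks. The paper's proof merely states these checks are straightforward, while you have written them out in full; there is no substantive difference.
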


\begin{proof} Given $f \in [X,\,C]$, it is straightforward to check that the condition of naturality on $\theta$ forces the definition \mbox{$\theta_X(f) = F(f)(\xi)$}. The rest of the proof consists also of straightforward checks.   
\end{proof}
\begin{corollary}
The Yoneda functor $\cc{C} \mr{h} \Eop{C}$ is fully faithful. \cqd
\end{corollary}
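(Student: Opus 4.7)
The plan is to reduce the corollary directly to Lemma \ref{yoneda1}. Full faithfulness of $h$ amounts to showing that for every pair of objects $X, Y \in \cc{C}$, the map
$$ [X,\, Y] \mr{h} Nat([-,\, X],\, [-,\, Y]), \quad f \longmapsto h_f = f_\ast, $$
is bijective. First, I would instantiate Lemma \ref{yoneda1} with $F = h_Y = [-,\, Y]$ and $C = X$, which immediately yields a bijection
$$ \eta \colon Nat([-,\, X],\, [-,\, Y]) \mr{} [X,\, Y], \quad \theta \longmapsto \theta_X(id_X). $$

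Next, I would verify that $\eta$ inverts the action of $h$ on morphisms. Given $f \in [X,\, Y]$, the component $(h_f)_X \colon [X,\, X] \mr{} [X,\, Y]$ is post-composition by $f$, so $\eta(h_f) = (h_f)_X(id_X) = f \circ id_X = f$. Hence $\eta \circ h = id_{[X,\, Y]}$, and since $\eta$ is already known to be a bijection, so is $h$ on hom-sets.

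The main obstacle is essentially non-existent: the corollary is a one-line unwinding of Lemma \ref{yoneda1} applied to the representable target $F = h_Y$, combined with the trivial observation that the canonical bijection $\eta$ of the lemma is a left inverse (hence the two-sided inverse) of the action of $h$ on hom-sets. The entire content of the corollary is thus packed into the single evaluation $\eta(h_f) = f$.
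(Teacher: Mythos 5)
Your argument is correct and is precisely the intended one: the paper leaves the corollary as an immediate consequence of Lemma \ref{yoneda1}, and your instantiation $F = h_Y$, $C = X$ together with the evaluation $\eta(h_f) = (h_f)_X(id_X) = f$ is exactly the unwinding that the \cqd is standing in for.
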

The following \emph{abuse of notation} introduced by Grothendieck 
\mbox{(SGA4, I, 141)} is at the core of Category Theory.
\begin{notation} \label{compatibilidad}
We identify $[-,\, X]$ with $X$ and $\theta$ with $\xi$, denoting both with a same label. In this way we have:
$$
x \in FX  \; \equiv \; X \mr{x} F.
$$
Given $X \mr{f} Y$, $F \mr{\varphi} G$, we also denote $f$ the natural transformation $f_\ast$. The naturality in the object and the functor variables means: 
$$
For \; y \in FY: \;\; F{f}(y) \in FX  \; \equiv \;  
X \mr{f} Y \mr{y} F.
$$
$$   
For \; x \in FX: \;\; \varphi_X(x) \in GX \; \equiv \; 
X \mr{x} F \mr{\varphi} G. 
$$
That is, $F$ and $\varphi$ act by composing, which legitimate this abuse of notation.
\end{notation}

\section{Yoneda II: Any presheaf is colimit of representable ones} \label{sec_yoneda2}
%
%
Given a category $\cc{C}$ and a functor $\cc{C} \mr{F} \cc{E}ns$, the \emph{Diagram} or \emph{Category of elements} of $F$, $\Gamma_F \mr{} \cc{C}$, has as objects pairs 
$(x,\,X),\, x \in FX$, and arrows defined by:
$$
\xymatrix@R=0.1ex@C=0.3ex
   {
    && (x, \, X) \ar[r]^f & (y,\,Y)
    \\
     \ar@{-}[rrrrrr] &&&&&&& {}
    \\
    &&  {X \mr{f} Y \;\; | \;\; F(f)(y) = x} & {\equiv} & { x = y \circ f}
   }
$$
There is a functor $\Diamond :\; \Gamma_F \mr{} \Eop{C}$, 
$\Diamond(x,\, X) = [-,\, X] (= X)$. The condition in the definition of arrows of $\Gamma_F$ means that the arrows 
$X \mr{\lambda_{(x,\,X)}} F$,  
$\lambda_{(x,\,X)} = x$, determine a cone 
$\Diamond \mr{\lambda} F$ of 
$\Diamond$ over $\Gamma_F$. 
\begin{lemma}[Yoneda II] \label{yoneda2} 
For any \mbox{$F \in \Eop{C}$, $H \in \Eop{C}$,} there is  a bijective correspondence: 
$$
\xymatrix@R=0.1ex@C=0.3ex
   {
    && F \mr{\varphi} H : & \{FX \mr{\varphi_X} HX\},
    &  natural \; transformations 
    \\
     \ar@{-}[rrrrrr] &&&&&&& {}
    \\
    && \Diamond \mr{\mu} H : & \{X \mr{\mu_{(x,\,X)}} H\},
    &  cones \; of \;  \Diamond \; over \; \Gamma_F
   } 
$$
To a natural transformation $\varphi$ we assign the cone defined by the composite 
$\Diamond  \mr{\lambda} F \mr{\varphi} H$. Then,
$$
\forall \; \mu  \;\; \exists \, ! \;\; \varphi \;\;\; | \;\;\;
\mu = \varphi \circ  \lambda
$$ 
%
%
That is, $(\Diamond  \mr{\lambda} F)$ $=$  
\{$[-,\, X] (= X) \mr{\lambda_{(x,\,X)}} F$\} is a colimit cone, we write:
$$
F = \colimite{\Gamma_F}{\Diamond} \;\;=\;\; 
\colimite{(x,\,X)}{h_X} \;\;=\;\; 
\colimite{(x,\,X)}{[-,\,X]}.
$$
\begin{proof} 
Given a cone $\mu$ and given 
$(X \mr{x} F)$, the only possible definition of 
$(X \mr{\varphi_X(x)} H)$ such that $\mu = \varphi \circ  \lambda$ is given by  
  \mbox{$\varphi_X(x) (\equiv \varphi \circ x) = \mu_{(x,\, X)}$.} Then, given $X \mr{f} Y$, it is straightforward to check that the naturality condition for $\varphi$ holds by the cone condition of $\mu$. 
\end{proof} 
\end{lemma}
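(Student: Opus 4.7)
My plan is to take advantage of the abuse of notation from Notation \ref{compatibilidad}, under which an element $x \in FX$ is literally an arrow $X \mr{x} F$ in $\Eop{C}$, and each component $\mu_{(x,X)}: X \to H$ of a cone is literally an element of $HX$. With this identification, the data on the two sides of the claimed bijection unpack to the same thing: a family of set maps $\{FX \mr{\varphi_X} HX\}_X$ is in one-to-one correspondence with a family of elements $\{\mu_{(x,X)} \in HX\}_{(x,X) \in \Gamma_F}$, via the tautological formula $\varphi_X(x) = \mu_{(x,X)}$. Moreover, since $\lambda_{(x,X)} = x$ by definition, the equation $\mu = \varphi \circ \lambda$ reads $\mu_{(x,X)} = \varphi \circ x = \varphi_X(x)$, so this same formula is exactly the compatibility condition required by the statement; in particular, any such $\varphi$ is forced.

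Once the underlying data are matched in this way, only two things remain: the naturality of $\varphi$ and the cone condition on $\mu$. I plan to show that these are literally the same statement. By the definition of $\Gamma_F$, every arrow in $\Gamma_F$ has the form $f: (F(f)(y),\, X) \to (y,\, Y)$ for some $f: X \to Y$ in $\cc{C}$ and some $y \in FY$, and conversely each such pair gives such an arrow. The cone condition on this arrow reads $\mu_{(F(f)(y),\, X)} = \mu_{(y,\, Y)} \circ f$, while under Notation \ref{compatibilidad} we have $H(f)(\mu_{(y,Y)}) = \mu_{(y,Y)} \circ f$. Translating via $\varphi_X(x) = \mu_{(x,X)}$, the cone condition becomes $\varphi_X(F(f)(y)) = H(f)(\varphi_Y(y))$, which is precisely the naturality square of $\varphi$ evaluated at $y$. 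So the two bookkeeping conditions coincide, indexed identically.

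The only step that is not purely tautological is the observation in the previous paragraph that each arrow of $\Gamma_F$ encodes exactly one naturality instance, and I expect it to be the main (though still elementary) point. After that, applying Yoneda I through Notation \ref{compatibilidad} in both directions produces mutually inverse assignments between natural transformations $\varphi$ and cones $\mu$, and the uniqueness clause follows by inspection from the formula $\mu_{(x,X)} = \varphi_X(x)$. Everything else is verifying that we are reading off the same data through two different lenses.
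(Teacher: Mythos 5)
Your proposal is correct and follows essentially the same route as the paper's proof: you force the definition $\varphi_X(x) = \mu_{(x,X)}$ from the requirement $\mu = \varphi\circ\lambda$, and then observe that the naturality squares for $\varphi$ and the cone conditions for $\mu$ over the arrows of $\Gamma_F$ are the same equations read through Notation \ref{compatibilidad}. You merely make explicit the ``straightforward check'' that the paper leaves to the reader, which is a fine (and arguably clearer) way to present it.
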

\section{Yoneda III: The co-continuous extension of a functor} \label{sec_yoneda3}
%
%
For any category $\cc{C}$ and any functor 
$\cc{C} \mr{p} \cc{E}ns^{op}$ we have the following diagram:
$$
\xymatrix@C=4ex@R=3ex
   { 
    {\cc{C}}  \ar[rr]^{h}
          \ar@/_2ex/[rrdd]_{p}
   & {}   \ar@{}[dd]^(.35){\approx\,\eta}
   & {\Eop{C}}  \ar[dd]^{\widehat{p}}
   \\
   \\
   & {}
   & {\cc{E}ns^{op}}
   }
$$
where $\widehat{p} = Nat(-,\, p)$, and 
$\eta:\,p \mr{\approx} \widehat{p}\, h$ is defined by, for $X \in \cc{C}$, 
\mbox{$FC \mr{\eta_C} Nat([-,\, C],\, p)$.} Note that this assignation is functorial in the variable $p$.

Clearly $\widehat{p}$ is cocontinous, and the formal properties of natural transformations yield that the functors $\widehat{(-)}$ and $h^\ast$  actually determine an equivalence of categories: 
$$
[\cc{C},\, \cc{E}ns^{op}] \pmrl{\widehat{(-)}}{h^\ast}  
                                      Cont[\Eop{C},\, \cc{E}ns^{op}].
$$ 
In fact, given $p$, we already have a natural isomorphism $p \mr{\approx} h^\ast \widehat{p}$ \mbox{(Yoneda I),} and given $\Eop{C} \mr{G} \cc{E}ns$, a natural isomorphism 
$\widehat{h^\ast G} \mr{\approx} G$ follows since 
$\widehat{h^\ast G}\,h = \widehat{G h}\,h \approx G h$ and both functors are cocontinous \mbox{(use Yoneda II).} 

We now pass to generalize this statement for any cocomplete  category $\cc{Z}$ in place of $\cc{E}ns^{op}$. 
We have the following commutative diagram whose construction  we now explain step by step:
$$ 
\xymatrix@C=4.5ex@R=4ex
   {
    {\cc{C}}  \ar[rr]^{h}  
              \ar[rdd]_{p} 
              \ar@/_8ex/[ddddrr]_{p_Z} 
              \ar@{}[drr]_{\widetilde{\eta}}
   && {\Eop{C}} \ar[ldd]^{\hspace{-1ex}\widetilde{p}} 
                \ar[rdd]^{\widehat{p}} 
                \ar@/^21ex/[dddd]^{\widehat{p_Z}}
                \ar@{}[dd]^{\eta'} 
   \\
   && {}
   \\ 
   & {\cc{Z}} \ar[rr]^{l} 
              \ar[rdd]_{[-,\,Z]} 
   & {}       \ar@{}[dd]^{\equiv}
   & {(\E{Z})^{op}} \ar[ldd]^{ev_Z}
   \\
   \\
   && {\cc{E}ns^{op}}
   }
$$
where $l$ is the contravariant Yoneda functor and $ev_Z$ is evaluation at $Z$. 

\vspace{1ex}

{\bf 1.} Clearly $[-, \,Z] = ev_Z \, l$. 

\vspace{1ex}

{\bf 2.} Set $p_Z = ev_Z \,l\,p = [p(-),\, Z]$. Note that $p_Z$ is also functorial in the variables $p$ and $Z$. 
We have $\widehat{p_Z}$ and  
$p_Z \mr{\eta_Z} \widehat{p_Z}\,h$. 

\vspace{1ex}

{\bf 3.}
We have $\widehat{p}$ such that $ev_Z \,\widehat{p} = p_Z$\,: Define $\widehat{p}$ by, for 
$H \in \Eop{C}$, $\widehat{p}(H)(Z) = \widehat{p_Z}(H) =  Nat(H,\, p_Z)$, clearly $\widehat{p}$ has all the functoriality requirements.
 Since $\widehat{p_Z}$ is cocontinuos and colimits are computed pointwise, it follows that $\widehat{p}$ is cocontinuos. 
 
 \vspace{1ex}  
 
{\bf 4.} The family of all $\eta_Z$ determines  
$l\,p \mr{\eta''} \widehat{p} \,h$ such that 
$ev_Z \, \eta'' =  \eta_Z$, 
\mbox{$(ev_Z \,l\,p \mr{ev_Z \, \eta''} ev_Z\, \widehat{p} \,h)$} $=$  
$(p_Z \mr{\eta_Z} \widehat{p_Z} \, h)$. 
 
\vspace{1ex}

{\bf 5.} Given $H \in \Eop{C}$, define 
$\widetilde{p}{H} \in \cc{Z}$ as a colimit cone in $\cc{Z}$, 
\mbox{$\{pX \mr{x} \widetilde{p}{H}\}_{(x,\, X) \in \Gamma_H}$}. Since $\widehat{p}$ as well as $l$ are cocontinuos it follows we have
$\,l\,\widetilde{p}\, H  \mr{\eta'_H} \widehat{p}\,H$ such 
that 
$\xymatrix 
    {
     \{l_{pX}  \ar[r]^{\eta''_X} 
             \ar[d]^{x} 
   & \widehat{p}\,h_X \ar[d]^{x}\}
  \\              
     l\,\widetilde{p}\, H  \ar[r]^{\eta'_H}
   & \widehat{p}\,H 
    }
$
$\hspace{-1.5ex} _{(x,\, X) \in \Gamma_H}$. Since $l$ is full and faithful (Yoneda I), it follows 
$\widetilde{p}$ is a functor in such a way that 
$l\, \widetilde{p} \mr{\eta'} \widehat{p}$ becomes a natural transformation.

\vspace{1ex}

{\bf 6.} Let $l\,p \mr{\theta} l\,\widetilde{p}\,h\,$ be 
$\;\theta = 
(l\,p \mr{\eta''} \widehat{p}\,h \mr{\eta'^{-1}\,h} 
l\,\widetilde{p}\,h)$.  
Since $l$ is full and faithful (YI) it follows there is 
$p \mr{\widetilde{\eta}} \widetilde{p}\,h$ such that 
$\eta'' = \eta'\,h \circ l\,\widetilde{\eta}$.

Relabeling  $\widetilde{\eta}$, again  as $\eta$ we have:
\begin{proposition} \label{preyoneda3}
For any cocomplete category $\cc{Z}$ and
any functor \mbox{$\cc{C} \mr{p} \cc{Z}$,} there exists a cocontinuos functor $\Eop{C} \mr{\widetilde{p}} \cc{Z}$ together with a natural isomorphism 
$p \mr{\eta} \widetilde{p}\,h$: 

$$
\xymatrix@C=4ex@R=3ex
   { 
    {\cc{C}}  \ar[rr]^{h}
          \ar@/_2ex/[rrdd]_{p}
   & {}   \ar@{}[dd]^(.35){\approx\,\eta}
   & {\Eop{C}}  \ar[dd]^{\widetilde{p}}
   \\
   \\
   & {}
   & {\cc{Z}}
   }
   \hspace{4ex} 
$$

\end{proposition}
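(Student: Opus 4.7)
My plan is to bootstrap from the set-valued case already handled at the start of this section, using the contravariant Yoneda embedding $\cc{Z} \mr{l} (\E{Z})^{op}$ (full and faithful by Yoneda I, and cocontinuous since $[\mathrm{colim}\, Z_i,\, W] = \lim [Z_i,\, W]$) as a fully faithful ``probe'' into a category where the theorem is already known. For each $Z \in \cc{Z}$ the composite $p_Z = [p(-),\, Z] = ev_Z\, l\, p$ is a functor $\cc{C} \mr{} \cc{E}ns^{op}$, and the set-valued case supplies a cocontinuous $\widehat{p_Z}: \Eop{C} \mr{} \cc{E}ns^{op}$ together with a natural isomorphism $p_Z \mr{\eta_Z} \widehat{p_Z}\, h$.

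I would then assemble these data pointwise by setting $\widehat{p}(H)(Z) := Nat(H,\, p_Z)$, yielding a functor $\widehat{p}: \Eop{C} \mr{} (\E{Z})^{op}$. Functoriality in all variables is automatic, and since (co)limits in $(\E{Z})^{op}$ are computed pointwise at each $Z$, the cocontinuity of each $\widehat{p_Z}$ lifts to cocontinuity of $\widehat{p}$; the family $\{\eta_Z\}_Z$ likewise glues into a natural isomorphism $l\, p \mr{\eta''} \widehat{p}\, h$ whose evaluation at $Z$ is $\eta_Z$.

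Next, for each $H \in \Eop{C}$, I would define $\widetilde{p}(H) \in \cc{Z}$ as the colimit in $\cc{Z}$ of the canonical diagram $\Gamma_H \mr{} \cc{C} \mr{p} \cc{Z}$, which exists by cocompleteness. Applying the cocontinuous $l$ to this colimit cone and comparing with $\widehat{p}(H) = \colimite{(x,X)}{\widehat{p}(h_X)} \cong \colimite{(x,X)}{l(pX)}$ (using Yoneda II to present $H$ as a colimit of representables, cocontinuity of $\widehat{p}$, and $\eta''$) produces a canonical isomorphism $l\,\widetilde{p}(H) \mr{\eta'_H} \widehat{p}(H)$. Full faithfulness of $l$ then upgrades $\widetilde{p}$ from a map on objects to a functor with $l\, \widetilde{p} \mr{\approx} \widehat{p}$ natural, and the composite $l\,p \mr{\eta''} \widehat{p}\,h \ml{\eta'\,h} l\,\widetilde{p}\,h$ lifts, once more by Yoneda I, to the desired natural isomorphism $p \mr{\eta} \widetilde{p}\,h$.

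The delicate point I expect to be the main obstacle is defining $\widetilde{p}$ on morphisms and confirming its cocontinuity: both are forced because after applying the fully faithful cocontinuous $l$ everything agrees with the cocontinuous $\widehat{p}$, so the required structure and preservation properties can be detected after $l$ and transferred back unambiguously.
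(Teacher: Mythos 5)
Your proposal is correct and follows essentially the same route as the paper: reduce to the set-valued case via the contravariant Yoneda embedding $\cc{Z} \mr{l} (\E{Z})^{op}$, assemble $\widehat{p}(H)(Z) = Nat(H,\, p_Z)$ pointwise, define $\widetilde{p}(H)$ as the colimit over $\Gamma_H$, and transfer functoriality, cocontinuity, and the isomorphism $\eta$ back through the full faithfulness of $l$. The paper's steps 1--6 preceding the proposition are exactly this argument.
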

\begin{lemma}[Yoneda III] \label{yoneda3}
For any cocomplete category $\cc{Z}$, 
precomposition with $h$, $h^\ast$, 
establishes an equivalence of categories, with a quasi-inverse given by the cocontinuos extension $\widetilde{(-)}$:
 
$$
[\cc{C},\, \cc{Z}] \pmrl{\widetilde{(-)}}{h^\ast}  
                                 Cont[\Eop{C},\, \cc{Z}].
$$
\end{lemma}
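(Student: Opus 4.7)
The plan is to read off the equivalence directly from Proposition \ref{preyoneda3}: one of the two natural isomorphisms required for the equivalence is already produced there, and the other reduces to Yoneda II plus cocontinuity.

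First I would observe that for any $\cc{C} \mr{p} \cc{Z}$, the isomorphism $p \mr{\eta} \widetilde{p}\,h$ of \ref{preyoneda3} is precisely a natural isomorphism $\mathrm{id}_{[\cc{C},\,\cc{Z}]} \Mr{} h^\ast \circ \widetilde{(-)}$; checking that $\eta$ is natural in the variable $p$ is the routine verification that each of the steps $1$--$6$ used to construct $\widetilde{p}$ and $\eta$ is functorial in $p$, which was already indicated in the construction (e.g.\ $p_Z = ev_Z\,l\,p$ is patently functorial in $p$, hence so is $\widehat{p}$, $\eta''$, and finally $\widetilde{p}$ and $\widetilde{\eta}$).

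Next I would handle the other composite: for any cocontinuous $\Eop{C} \mr{G} \cc{Z}$, construct a natural isomorphism $\widetilde{G\,h} \mr{\approx} G$. Applying \ref{preyoneda3} to the functor $p = G\,h$ yields a natural isomorphism $G\,h \mr{\eta} \widetilde{G\,h}\,h$. Both $G$ and $\widetilde{G\,h}$ are cocontinuous functors $\Eop{C} \mr{} \cc{Z}$, and by Yoneda II every presheaf $H$ is canonically the colimit $H = \colimite{(x,\,X)}{h_X}$. Applying the two cocontinuous functors to this colimit and transporting along $\eta$ on each representable summand produces a cone, whence a comparison arrow $\widetilde{G\,h}(H) \mr{\beta_H} G(H)$; the universal property on both sides gives an inverse, and naturality in $H$ follows from the naturality of the defining cones. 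This yields the second natural isomorphism $\widetilde{(-)} \circ h^\ast \Mr{\approx} \mathrm{id}_{Cont[\Eop{C},\,\cc{Z}]}$.

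Finally I would assemble the two natural isomorphisms into the statement that $h^\ast$ and $\widetilde{(-)}$ are quasi-inverse, observing that $\widetilde{(-)}$ does land in $Cont[\Eop{C},\,\cc{Z}]$ since $\widetilde{p}$ is cocontinuous by construction (step $5$). The main obstacle I anticipate is keeping the naturality bookkeeping honest at the level of the $2$-functors $\widetilde{(-)}$ and $h^\ast$: one must verify that a morphism $p \Mr{} q$ in $[\cc{C},\,\cc{Z}]$ produces, via the step-by-step construction, a $2$-cell $\widetilde{p} \Mr{} \widetilde{q}$ in $Cont[\Eop{C},\,\cc{Z}]$ commuting with the $\eta$'s, and similarly for $\beta$ under $2$-cells $G \Mr{} G'$. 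All of this is, however, forced by the uniqueness-up-to-unique-isomorphism properties built into \ref{preyoneda3} and Yoneda II, so the argument is formal rather than substantive.
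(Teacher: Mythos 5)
Your proposal is correct and follows essentially the same route as the paper: the unit comes directly from Proposition \ref{preyoneda3}, and the counit $\widetilde{G\,h}\mr{\approx}G$ is obtained by comparing two cocontinuous functors that agree on representables via the colimit presentation of Yoneda II. The paper states this in one line; you have merely expanded the same argument, including the naturality bookkeeping it leaves implicit.
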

\begin{proof} 
In fact, given $p$, we already have 
$p \mr{\approx} h^\ast \widetilde{p}$, and given $\Eop{C} \mr{G} \cc{Z}$, a natural isomorphism 
$\widetilde{h^\ast G} \mr{\approx} G$ follows since 
\mbox{$\widetilde{h^\ast G}\,h = \widetilde{G h}\,h \approx G h$} and both functors are cocontinous (use Yoneda II).
\end{proof} 
\section{Yoneda IV: Right adjoint for the cocontinuos extension} \label{sec_yoneda4} 
%
%
Given any two categories $\cc{X},\; \cc{Y}$, we define the category $LA[\cc{X},\, \cc{Y}]$ of left adjoint functors and natural transformations,  more precisely: 

\begin{definition}[LA] \label{leftadjoint}
The category $LA[\cc{X},\, \cc{Y}]$: its objects are triples 
\mbox{$u = (u^*, u_*, \varphi)$,}  
with functors $\cc{X} \pmrl{u^*}{u_*} \cc{Y}$, and an adjunction $\varphi$:  
$u^* \dashv u_*$. Given two objects $u,\, v$, an arrow is a natural transformation $u^* \mr{\alpha} v^*$ (note that by means of the adjunctions,  $\alpha$ uniquely determines a natural transformation 
$v_* \mr{\beta} u_*$ and vice-versa). Composition is just the vertical composition of natural transformations
\end{definition}
We will see that the cocontinuos extension determines the left adjoint component $u^*$ of an object in this category: 

\vspace{1ex}

For any functor $\cc{C} \mr{p} \cc{Z}$ we enlarge the diagram in section 
\ref{sec_yoneda3} with a functor $h_p$ as follows: 
$$
\xymatrix@C=4ex@R=3ex
   { 
    {\cc{C}}  \ar[rr]^{h}
          \ar@/_2ex/[rrdd]_{p}
   & {}   \ar@{}[dd]^(.35){\approx\,\eta}
   & {\Eop{C}}  \ar@{<-}@<1.5ex>[dd]^{h_p} 
                \ar@<-1.2ex>[dd]^{\!\widetilde{p}}
   \\
   \\
   & {}
   & {\cc{Z}}
   }
$$
\begin{definition} \label{def_hp}
$h_p$ is defined as, for $Z \in \cc{Z}$, 
$h_p(Z) = p_Z = [p(-),\, Z]$, clearly functorial in the variable $Z$.
\end{definition}
%
%
Given a functor $R \in \Eop{C}$, the colimit cone over its category of elements $\Gamma_R$, 
\{$h_X  \mr{\lambda_{(x,\,X)}} R\}_{(x,\,X) \in \Gamma_R}$, $\lambda_{(x,\,X)} = x$ (Yoneda II), determines the following diagrams, with arrows 
$\alpha$ as indicated:
$$
\xymatrix@C=7ex
   {
    h_X \ar[r]_{\lambda_{(x,\, X)}} 
        \ar@/^4ex/[rr]^{\alpha_{(x,\, X)}}
  & R \ar[r]_{\alpha}
  & h_p(Z)
   }    
\hspace{3ex}
\xymatrix@C=7ex
   {
    pX \ar@/^4ex/[rrr]^{\alpha_{(x,\, X)}}
       \ar[r]_{\approx\,\eta}
  & \widetilde{p}(h_X)  \ar[r]_{\widetilde{p}(\lambda_{(x,\, X)})} 
  & \widetilde{p}(R) \ar[r]_{\alpha}
  & Z
   }
$$
Note that the cones with vertices $R$ and $\widetilde{p}(R)$ are both colimit cones. Given $\alpha$ in the left diagram, composing with 
$\lambda_{(x,\, X)}$ determines bijectively  a cone of natural transformations $\alpha_{(x,\, X)}$. By Yoneda's Lemma (Yoneda I) this corresponds bijectively to  
\mbox{$\alpha_{(x,\, X)} \in h_p(Z)(X)$,} 
which by definition of 
$h_p$, is a cone $pX \mr{\alpha_{(x,\, X)}} Z$ in the right diagram, which in turn determines bijectively an arrow 
$\alpha$ in the second diagram. This establishes an adjunction  $\varphi$ between the functors 
$\widetilde{p}$ and $h_p$.


From Yoneda III and some checking it follows: 
\begin{lemma}[Yoneda IV] \label{yoneda4}
For any cocomplete category $\cc{Z}$, 
precomposition with $h$, $h^\ast$, 
establishes an equivalence of categories, with a quasi-inverse given by the cocontinuos extension
 $\widetilde{(-)}$:
$$
[\cc{C},\, \cc{Z}] \pmrl{\ell}{h^\ast} 
 LA[\Eop{C},\, \cc{Z}]
$$
where $\ell(p) = (\widetilde{p},\,h_p,\, \varphi)$ as defined in \ref{def_hp}, and $h^*(u^*,\, u_*,\, \varphi) = u^* h.$ 
\end{lemma}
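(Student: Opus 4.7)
The plan is to bootstrap the equivalence of Yoneda~III (Lemma~\ref{yoneda3}) through the forgetful passage ``take the left-adjoint component'', using two facts: every cocontinuous extension $\widetilde{p}$ comes canonically equipped with a right adjoint $h_p$ (already constructed in the paragraph above the statement), and morphisms in $LA[\Eop{C},\cc{Z}]$ are, by Definition~\ref{leftadjoint}, merely natural transformations between left-adjoint components.

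First I would check that $\ell$ is well-defined on objects. The construction preceding the lemma exhibits, for each $p$, the bijection $\varphi$ between arrows $\widetilde{p}(R) \to Z$ in $\cc{Z}$ and natural transformations $R \to h_p(Z)$ in $\Eop{C}$ as a composite of three bijections: the colimit universal property at $\widetilde{p}(R)$, Yoneda~I applied at each $(x,X) \in \Gamma_R$, and the colimit universal property at $R$. Each step is natural in $R$ and $Z$, so $\varphi$ furnishes a genuine adjunction and $\ell(p) = (\widetilde{p}, h_p, \varphi)$ lies in $LA[\Eop{C},\cc{Z}]$. Functoriality of $\ell$ and of $h^*$ on morphisms is then immediate: $\ell$ acts on a natural transformation $p \to q$ exactly as $\widetilde{(-)}$ does in Yoneda~III, and $h^*$ sends $\alpha \colon u^* \to v^*$ to $\alpha h \colon u^* h \to v^* h$.

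For the equivalence itself, both directions reduce to Yoneda~III. On one side, $h^* \ell(p) = \widetilde{p}\, h \approx p$ naturally in $p$ by Lemma~\ref{yoneda3}. On the other, given $(u^*, u_*, \varphi) \in LA[\Eop{C},\cc{Z}]$, we have $\ell h^*(u^*, u_*, \varphi) = (\widetilde{u^* h}, h_{u^* h}, \varphi')$; since $u^*$ is a left adjoint it is cocontinuous, so Yoneda~III supplies a natural isomorphism $\widetilde{u^* h} \to u^*$ of left-adjoint components, which is \emph{eo ipso} an isomorphism in $LA[\Eop{C},\cc{Z}]$ by the definition of its morphisms. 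No separate comparison of the right-adjoint components or of the adjunctions themselves is needed, which is precisely why packaging the data as a triple costs nothing here.

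The main obstacle, modest as it is, lies in the naturality verification for the adjunction $\varphi$ of $\ell(p)$ in both $R$ and $Z$; everything else is either a direct consequence of Yoneda~III or transparent bookkeeping about the triples in $LA$. This matches the author's terse ``From Yoneda~III and some checking it follows''.
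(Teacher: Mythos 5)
Your proposal is correct and follows essentially the same route as the paper: the author's entire proof is the construction of the adjunction $\varphi$ preceding the statement together with the remark ``From Yoneda III and some checking it follows,'' and your argument simply makes that checking explicit. In particular, your key observation --- that morphisms in $LA[\Eop{C},\,\cc{Z}]$ involve only the left-adjoint components, so the isomorphism $\widetilde{u^\ast h} \approx u^\ast$ supplied by Yoneda III (using cocontinuity of the left adjoint $u^\ast$) already gives an isomorphism in $LA$ --- is exactly the point the paper leaves implicit.
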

%
%
%
%
\section{Yoneda V: Flatness} \label{sec_yoneda5} 
%
This section is tautological. By definition a functor 
$\cc{C} \mr{p} \cc{Z}$ is \emph{flat} if its cocontinous extension 
$\Eop{C} \mr{\widetilde{p}} \cc{Z}$ is \emph{exact}, i.e., it  preserves finite limits. Then, as full subcategories in Yoneda IV we have: 
\begin{lemma}[Yoneda V] \label{yoneda5}
The statement in \ref{yoneda4} holds for the following diagram of full subcategories:
$$
Flat[\cc{C},\, \cc{Z}] \pmrl{\ell}{h^\ast}  ELA[\Eop{C},\, \cc{Z}],
$$ 
where the objects of $Flat$ are the flat functors, and those of $ELA$  are the ones of $LA$ with exact $u^*$. 
\end{lemma}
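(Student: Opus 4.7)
The plan is to observe that $Flat[\cc{C},\, \cc{Z}]$ and $ELA[\Eop{C},\, \cc{Z}]$ are \emph{full} subcategories of $[\cc{C},\, \cc{Z}]$ and $LA[\Eop{C},\, \cc{Z}]$ respectively, so once we know that the functors $\ell$ and $h^*$ of Yoneda IV restrict to functors between the chosen subcategories, the equivalence itself restricts automatically; the unit and counit natural isomorphisms of Yoneda IV simply transport over.

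First I would check that $\ell$ sends $Flat$ into $ELA$. This is immediate from the definitions: $\ell(p) = (\widetilde{p},\,h_p,\,\varphi)$, and by the definition recalled at the top of the section, $p$ is flat precisely when its cocontinuous extension $\widetilde{p}$ is exact. Thus the exactness condition required of the left-adjoint component in $ELA$ is exactly the flatness of $p$. No work beyond unwinding definitions is needed.

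Next I would check that $h^*$ sends $ELA$ into $Flat$. Given $u = (u^*,\, u_*,\,\varphi) \in ELA$, set $p = u^* h = h^*(u)$. Since $u^*$ is a left adjoint it is cocontinuous, so by Yoneda III applied to $u^*$ we obtain a natural isomorphism $\widetilde{p} = \widetilde{u^* h} \,\approx\, u^*$ as cocontinuous functors $\Eop{C} \mr{} \cc{Z}$ (both being cocontinuous extensions of $p$, they must agree up to natural isomorphism by the essential uniqueness in \ref{yoneda3}). Exactness of $u^*$ then transfers to $\widetilde{p}$, witnessing that $p$ is flat.

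The main point to verify — and the only thing that is not pure bookkeeping — is the transfer of exactness through the isomorphism $\widetilde{u^*h}\approx u^*$, but this is trivial since preservation of finite limits is invariant under natural isomorphism. Because both $\ell$ and $h^*$ restrict to the full subcategories, and because the unit $p \mr{\approx} h^*\ell(p)$ and counit $\ell h^*(u) \mr{\approx} u$ of Yoneda IV lie inside these subcategories (being isomorphisms, they preserve membership), the restricted pair is an equivalence, as claimed. I do not foresee any substantive obstacle; the tautological character noted by the author reflects the fact that flatness was \emph{defined} so as to make this lemma hold.
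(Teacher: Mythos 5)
Your proof is correct and follows essentially the same route as the paper, which gives no separate proof at all but simply observes that the restriction is tautological: flatness of $p$ is \emph{defined} as exactness of $\widetilde{p}$, and both subcategories are full. The one detail you make explicit that the paper leaves implicit --- that $h^\ast$ lands in $Flat$ because $\widetilde{u^\ast h}\approx u^\ast$ by the essential uniqueness in Yoneda III and exactness transfers along a natural isomorphism --- is a worthwhile clarification but not a different argument.
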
 
\section{Preliminaries on topos theory} \label{top_prelim}  

We recall some basic topos theory and definitions.

\begin{definition}[SGA4 II, Def. 1.3 and Def. 3.0.2] \label{site_def}
A \emph{site} is a category $\cc{C}$ together with a \emph{pretopology}, that is, for each object $X \in \cc{C}$, a collection $Cov(X)$ of families
$X_i \mr{} X $ subject to the three usual axioms, and the following assumption, \emph{Topological generators}: There exists a small set of objects  $\cc{G}$ which covers every object, that is, such that for every $X$ there is a cover $X_i \mr{} X  \in  Cov(X)$ with all the $X_i \in \cc{G}$.

The \emph{canonical} pretopology has as covers all universal strict epimorphic families (see Definition \ref{sef}). A site is said to be \emph{subcanonical} when the covers are strict epimorphic families. 
\end{definition}
%
%
%
\begin{definition} \label{sheaf_def}
A presheaf  $H \in \Eop{C}$ is a sheaf if it believes that the covers $X_i \mr{} X$ are strict epimorphic families in 
$\Eop{C}$ \mbox{(see Definition \ref{sef}).}
\end{definition}

A site is subcanonical if and only if the representable functors are sheaves.

\begin{corollary} [SGA4 II Corollaire 4.11] \label{ls_Sh}
Natural transformations between sheaves are determined by their values on the topological generators, thus sheaves determine a full subcategory $Sh(\cc{C}) \subset \Eop{C}$
of the illegitimate category of presheaves.
\end{corollary}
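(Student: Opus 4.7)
The plan is to prove the first assertion, that a natural transformation between sheaves is determined by its restriction to $\cc{G}$, by a direct Yoneda--plus--sheaf--axiom argument, and then to deduce the size consequence as an immediate counting.

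Suppose $\varphi,\psi\colon H \to K$ are two natural transformations between sheaves with $\varphi_G = \psi_G$ for every $G \in \cc{G}$. I would fix an arbitrary $X \in \cc{C}$ and $x \in HX$, and invoke Notation \ref{compatibilidad} to view $x$ as an arrow $X \mr{x} H$ in $\Eop{C}$, so that $\varphi_X(x) \equiv \varphi \circ x$ and $\psi_X(x) \equiv \psi \circ x$ become two parallel arrows $X \to K$. By the topological generators assumption (Definition \ref{site_def}) there exists a cover $\{X_i \mr{\lambda_i} X\} \in Cov(X)$ with each $X_i \in \cc{G}$. Precomposing with $\lambda_i$ and invoking naturality gives
$$
(\varphi \circ x)\circ \lambda_i \;=\; \varphi_{X_i}(x\circ\lambda_i) \;=\; \psi_{X_i}(x\circ\lambda_i) \;=\; (\psi \circ x)\circ \lambda_i,
$$
the middle equality being the hypothesis $\varphi_{X_i} = \psi_{X_i}$ for $X_i \in \cc{G}$.

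The decisive step is then to apply the sheaf axiom for $K$. By Definition \ref{sheaf_def}, $K$ believes the cover $\{\lambda_i\}$ is strict epimorphic in $\Eop{C}$, which by Definition \ref{sef} means that every compatible family of arrows $X_i \to K$ extends \emph{uniquely} along $\{\lambda_i\}$ to an arrow $X \to K$. The common family $\{(\varphi \circ x)\circ \lambda_i\} = \{(\psi \circ x)\circ \lambda_i\}$ produced above is compatible, and both $\varphi \circ x$ and $\psi \circ x$ are extensions of it; by uniqueness they coincide. Hence $\varphi_X(x) = \psi_X(x)$, and since $X$ and $x$ were arbitrary, $\varphi = \psi$.

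For the legitimacy of $Sh(\cc{C})$, observe that evaluation on the small set $\cc{G}$ yields an injection
$$
Nat(H,K) \;\mono\; \prod_{G \in \cc{G}} \cc{E}ns(HG,\,KG),
$$
and the right-hand side is a small set because $\cc{G} \ain \cc{U}$ and each $HG, KG$ is small. Therefore every hom-set of $Sh(\cc{C})$ is small, so $Sh(\cc{C})$ is a locally small full subcategory of the possibly illegitimate category $\Eop{C}$. The only subtlety I anticipate is notational rather than conceptual: one must parse ``$K$ is a sheaf'' through Definitions \ref{sheaf_def} and \ref{sef} correctly, so that the uniqueness clause of strict epimorphicity is precisely the tool that forces $\varphi \circ x = \psi \circ x$; once that translation is made, the rest is a one-line application of the Yoneda identification.
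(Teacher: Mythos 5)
Your proof is correct. The paper itself gives no argument for this corollary --- it is stated as a citation of SGA4 II Corollaire 4.11 --- so there is nothing to compare against, but your argument is precisely the standard one: agreement on the generators plus naturality gives agreement of the two restricted families along a cover by objects of $\cc{G}$, and the \emph{uniqueness} clause in Definition \ref{sef}, applied to the sheaf $K$ as target, forces $\varphi\circ x=\psi\circ x$; the local smallness of $Sh(\cc{C})$ then follows from the injection $Nat(H,K)\mono\prod_{G\in\cc{G}}[HG,\,KG]$ into a small set. The one step you flag as a possible subtlety (compatibility of the restricted family) is indeed automatic, since the family is the restriction of a single arrow $X\to K$.
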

Note that unless $\cc{C}$ is a small category, the illegitimate category of presheaves  $\Eop{C}$ is not a category, in particular it is not a topos (see definition \ref{topos_def} below). 

\begin{theorem} [SGA4 II 4.4.0] \label{sheaf_ass}
\emph{The associated sheaf functor}: There is a left exact left adjoint functor (necessarily cocontinuos),  $a: \Eop{C} \mr{} Sh(\cc{C})$ to the inclusion $ i: Sh(\cc{C}) \subset \Eop{C}$, $a \dashv i$,\; 
$id \approx a\,i$,\; $id \Rightarrow i\,a$.
\end{theorem}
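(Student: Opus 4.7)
The plan is to construct $a$ via Grothendieck's \emph{plus construction}. For a presheaf $F \in \Eop{C}$, an object $X \in \cc{C}$, and a cover $R = \{X_i \mr{} X\} \in Cov(X)$, I would define the matching set $Match(R, F)$ to be the set of tuples $(x_i)_i$ with $x_i \in FX_i$ that are pairwise compatible on every overlap; this is a limit in $\cc{E}ns$ (and reduces to a plain equalizer when fiber products exist). Then I set
$$
LF(X) \;=\; \colimite{R \in Cov(X)}{Match(R, F)},
$$
with $Cov(X)$ preordered by refinement, which is filtered (one may pass to covering sieves if the site lacks fiber products). Functoriality of $L$ in $F$ and in $X$ is routine, and the trivial cover $\{id_X\}$ furnishes a canonical natural transformation $\eta: id \Rightarrow L$.

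I would then establish three properties in turn: (i) $LF$ is always \emph{separated}; (ii) if $F$ is already separated, then $LF$ is a sheaf; (iii) $L$ preserves finite limits. Property (iii) is the heart of the matter: each $Match(R, -)$, being a limit in $\cc{E}ns$, preserves finite limits; and since filtered colimits in $\cc{E}ns$ commute with finite limits, $L$ is left exact. Setting $a := L \circ L$ then produces a left exact functor landing in $Sh(\cc{C})$ by (i) and (ii). The adjunction $a \dashv i$ and the counit isomorphism $id \approx a\, i$ drop out because for a sheaf $H$ the map $H \mr{} LH$ is already an isomorphism: the sheaf condition gives $Match(R, H) \approx HX$ for every $R \in Cov(X)$, so the filtered colimit is constant with value $HX$.

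The main obstacle is set-theoretic: when $\cc{C}$ is large, $Cov(X)$ need not be a small category and $LF(X)$ could fail to be a small set. This is precisely the role of the \emph{topological generators} axiom in Definition \ref{site_def}: restricting to covers $\{X_i \mr{} X\}$ with all $X_i$ drawn from the small set $\cc{G}$ yields a small cofinal subfamily of $Cov(X)$, so $LF(X)$ is genuinely small and $Sh(\cc{C})$ is a legitimate category in the sense of the Convention of Section \ref{cat_prelim}. A secondary subtlety is the absence of fiber products, handled by working with sieves or by expanding the matching diagram with all compatible test maps $Z \mr{} X_i$, $Z \mr{} X_j$; in either case each matching construction remains a limit in $\cc{E}ns$, so the left-exactness argument goes through unchanged.
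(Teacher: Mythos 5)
You should note first that the paper does not prove this statement at all: Theorem \ref{sheaf_ass} is imported verbatim from SGA4 II 4.4.0 and used as a black box (the whole point of the note is to derive Yoneda VI and VII \emph{from} the existence of $a$, not to reprove it). So there is no in-paper proof to compare against. What you have written is, in outline, the standard plus-construction proof, and the outline is essentially sound: the refinement preorder on $Cov(X)$ is filtered (this uses the composition and base-change axioms of the pretopology, which you should invoke explicitly); the two-step ``$LF$ separated, $L(\text{separated})$ a sheaf'' yields $a = L\circ L$; left exactness follows from ``limits commute with limits'' (each $Match(R,-)$ preserves \emph{all} limits, not just finite ones, even when the cover is infinite) together with the commutation of filtered colimits with finite limits in $\cc{E}ns$; and your use of the topological-generators axiom to extract a small cofinal subfamily of $Cov(X)$, so that $LF(X)$ is a small set and $Sh(\cc{C})$ a legitimate category, is exactly the role that axiom plays in Definition \ref{site_def}.

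The one genuinely thin spot is the adjunction itself. Knowing that $\eta_H : H \to LH$ is an isomorphism for every sheaf $H$ gives you the counit isomorphism $a\,i \approx id$ and the \emph{existence} half of the bijection $[aF,\,H] \cong [F,\,iH]$: any $f : F \to iH$ extends to $LLF \to LLH \cong H$. It does not by itself give \emph{uniqueness} of the extension, i.e.\ that two maps $LLF \to H$ into a sheaf agreeing after precomposition with the unit $F \to LLF$ coincide. For that you need the additional lemma that $\eta_F$ is locally surjective (every element of $LF(X)$ is, after passing to a cover, in the image of $F$), combined with separatedness of $H$; the functor $L$ is not a well-pointed idempotent monad ($\eta_{LF}$ and $L\eta_F$ are not isomorphisms in general), so the universal property does not ``drop out'' formally from properties (i)--(iii). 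Add that local-surjectivity lemma and the argument is complete.
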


\begin{definition} \label{canonico}
The canonical functor $\varepsilon: \cc{C} \mr{} Sh(\cc{C})$ is defined to be the composite $\varepsilon = a\,h$ of the associated sheaf with the yoneda functor. 
\end{definition}

By definition the canonical functor is exact and sends covering families into strict epimorphic families. When the site is subcanonical it is fully faithful and coincides with the yoneda functor, 
$i \, \varepsilon \approx h$. 

\begin{definition} [SGA4 IV Definition 1.1] \label{topos_def}
A \emph{Topos} is a category $\cc{Z}$ such that there exists a site
$\cc{C}$ such that $\cc{Z}$ is equivalent to the category 
$Sh(\cc{C})$ of sheaves on $\cc{C}$ (note that the presheaf (illegitimate) category $\Eop{C}$ is not a topos unless the site $\cc{C}$ is small). 
\end{definition}
%
%
%
%
%
%
Recall the notion of morphism of topoi and their morphisms :

\begin{definition}[SGA4, IV, 3.1 and 3.2] \label{TOP}
Given any two topoi $\cc{X}$, $\cc{Y}$, a morphism 
\mbox{$\cc{X} \mr{u} \cc{Y}$} is a triple $u = (u^*, u_*, \varphi)$ with functors $\cc{Y} \pmrl{u^*}{u_*} \cc{X}$, $u^*$ exact, and an adjunction \mbox{$\varphi$: $u^* \dashv u_*$,} ($u^*$ is called the \emph{inverse image} and 
\mbox{$u_*$ the \emph{direct image}).}

Given two morphisms $u,\, v$, a morphism of morphims  is a natural transformation $u_* \mr{\beta} v_*$ (note that by means of the adjunctions,  $\beta$ uniquely determines a natural transformation 
$v_* \mr{\alpha} u_*$ and vice-versa). Composition is just the vertical composition of natural transformations. This defines a category $TOP[\cc{X},\, \cc{Y}]$.
\end{definition}
\begin{observation} 
Note that 
$TOP[\cc{X},\, \cc{Y}] \, =
                      \, ELA[\cc{Y},\, \cc{X}]^{op}$,
that is, objects and morphisms are both reversed
\footnote{Several authors have changed this Grothendieck's convention.}.                      
\end{observation}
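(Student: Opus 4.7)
The plan is to exhibit an equality of categories by matching data on both sides, while carefully tracking where each variance reversal occurs. Since an object of $TOP[\cc{X},\,\cc{Y}]$ is a morphism of topoi $\cc{X} \to \cc{Y}$ whereas an object of $ELA[\cc{Y},\,\cc{X}]$ is an exact left adjoint pointing from $\cc{Y}$ to $\cc{X}$, the swap of arguments is already absorbed into the statement; the remaining duality is what the ``$op$'' accounts for.

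First, I would observe that the classes of objects agree on the nose. An object of $ELA[\cc{Y},\,\cc{X}]$ is, by Definition \ref{leftadjoint} (with the symbols $\cc{X}$ and $\cc{Y}$ there reading $\cc{Y}$ and $\cc{X}$), a triple $(u^*, u_*, \varphi)$ with $u^*\colon \cc{Y} \to \cc{X}$, $u_*\colon \cc{X}\to \cc{Y}$, adjunction $\varphi\colon u^* \dashv u_*$, and $u^*$ exact. By Definition \ref{TOP}, an object of $TOP[\cc{X},\,\cc{Y}]$ is a triple of precisely the same shape. So no identification is needed on objects beyond reading off the definitions.

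Next, I would compare morphisms. Both definitions record the fact that given two adjunctions with the same source and target, a natural transformation $\alpha\colon u^* \to v^*$ determines bijectively a mate $\beta\colon v_* \to u_*$, and conversely. In $ELA[\cc{Y},\,\cc{X}]$ a morphism $u \to v$ is by definition such an $\alpha$ (equivalently such a $\beta$). Passing to $ELA[\cc{Y},\,\cc{X}]^{op}$ reverses the direction, so a morphism $u \to v$ in the opposite category is a morphism $v \to u$ in $ELA[\cc{Y},\,\cc{X}]$, namely $\alpha\colon v^* \to u^*$, equivalently $\beta\colon u_* \to v_*$. This last description is exactly the datum of a morphism $u \to v$ in $TOP[\cc{X},\,\cc{Y}]$ according to Definition \ref{TOP}.

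Finally, composition is vertical composition of natural transformations in both categories, and the mate correspondence is functorial with respect to vertical composition; identities go to identities. Hence the identification above is functorial in each direction and supplies the claimed equality. The only real pitfall, and the place I would be most careful, is to keep separate the \emph{two} reversals at play: the mate bijection (an involution on $\mathrm{Hom}$-sets between fixed objects, not a reversal of direction) and the passage to the opposite category (which does reverse direction). Once those are kept apart, the observation is pure bookkeeping and requires no further input.
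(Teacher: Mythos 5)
Your proposal is correct and matches the paper, which states this as an observation with no written proof precisely because the argument is the definitional bookkeeping you carry out: objects coincide on the nose once the arguments are swapped, and the $(-)^{op}$ accounts exactly for the passage from $2$-cells $u_*\mr{\beta} v_*$ to their mates $v^*\mr{\alpha}u^*$. Your explicit separation of the mate bijection from the reversal of direction is exactly the point the paper's phrase ``objects and morphisms are both reversed'' is gesturing at.
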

%
%
%
\section{Yoneda VI: Classifying site morphisms} \label{sec_yoneda6}

Note that Lemma \ref{yoneda5} in section \ref{sec_yoneda5} reads as follows: 

For any topos $\cc{Z}$, the functors 
\begin{equation} \label{prepreyoneda6}
Flat[\cc{C},\, \cc{Z}]^{op} \pmrl{\ell}{h^\ast} 
TOP[\cc{Z},\,\Eop{C}] \; (= ELA[\Eop{C},\, \cc{Z}]^{op})
\end{equation}
establish an equivalence of categories (we slightly abuse notation since $\Eop{C}$ is not a topos when $\cc{C}$ is not small). 

We will now generalize this to a general topos of the form $Sh(\cc{C})$ in place of the (illegitimate) presheaf category $\Eop{C}$.

We say that a functor $\cc{C} \mr{p} \cc{Z}$ into a topos 
 is \emph{continuos}  if it sends covering families into strict  epimorphic families. That is, for each
 \mbox{$X_i \mr{} X \; \in \; Cov(X)$, $pX_i \mr{} pX$} is a strict epimorphic family in $\cc{Z}$. 
\begin{proposition} \label{preyoneda6}
Given  a flat functor $p \in Flat[\cc{C},\, \cc{Z}]$, for any \mbox{$Z \in \cc{Z}$, $h_p(Z)$} is a sheaf if and only if $p$ is continuos.
\end{proposition}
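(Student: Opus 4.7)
The plan is to unravel both sides of the equivalence into statements about existence and uniqueness of factorizations $\alpha: pX \to Z$ of a given family $\{\alpha_i : pX_i \to Z\}$ through the cover, and to show that the two notions of compatibility at play coincide precisely because $p$ is flat.

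First I would unravel ``$h_p(Z)$ is a sheaf'' using Definition \ref{sheaf_def}, Yoneda I, and Yoneda II (the latter lets one test the strict-epi condition in $\Eop{C}$ only on representable probe presheaves, since every presheaf is a colimit of representables). This yields: for every cover $\{X_i \xto{\lambda_i} X\}$ and every family $\{\alpha_i \in [pX_i,\, Z]\}$ satisfying $\alpha_i \circ p(x) = \alpha_j \circ p(y)$ for all $W \in \cc{C}$ and all $x: W \to X_i$, $y: W \to X_j$ with $\lambda_i x = \lambda_j y$, there exists a unique $\alpha: pX \to Z$ with $\alpha \circ p\lambda_i = \alpha_i$. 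On the other hand, ``$p$ is continuous'' asserts the same existence-and-uniqueness clause, but with compatibility tested against arbitrary $Z' \in \cc{Z}$ and arrows in $\cc{Z}$. Call these two notions $\cc{C}$-compatibility and $\cc{Z}$-compatibility; $\cc{Z}$-compatibility trivially implies $\cc{C}$-compatibility by specializing $Z' = pW$, $u = p(x)$, $v = p(y)$.

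The reverse implication is the crux and is exactly where flatness enters. By Yoneda II, the presheaf pullback is a colimit of representables:
$h_{X_i} \times_{h_X} h_{X_j} \;=\; \colimite{(W,\, x,\, y)}{h_W}$,
indexed over triples in $\cc{C}$ satisfying $\lambda_i x = \lambda_j y$. Applying $\widetilde{p}$, which is cocontinuous by Yoneda III and exact by flatness of $p$, and using the natural isomorphism $\widetilde{p}\,h \approx p$, one obtains simultaneously the pullback $pX_i \times_{pX} pX_j$ in $\cc{Z}$ and the colimit $\colimite{(W,\, x,\, y)}{pW}$, and these two descriptions agree. Consequently a pair $(\alpha_i, \alpha_j)$ agrees on the pullback in $\cc{Z}$ (that is, is $\cc{Z}$-compatible) iff the cones $\alpha_i \circ p(x)$ and $\alpha_j \circ p(y)$ agree at each $pW$ (that is, is $\cc{C}$-compatible). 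The two compatibility conditions coincide, so the two sides of the desired equivalence match.

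The main obstacle is exactly bridging the asymmetry in how compatibility is tested on the two sides: in the sheaf condition it ranges over presheaf probes in $\Eop{C}$, while in the continuity condition it ranges over objects of $\cc{Z}$. Flatness is precisely the hypothesis needed to make $\widetilde{p}$ carry the pullback of representables, expressed colimit-wise by Yoneda II, onto the genuine pullback in $\cc{Z}$ of the $p$-images, thereby identifying the two compatibility conditions.
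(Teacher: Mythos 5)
Your proof is correct and follows the same skeleton as the paper's: set up the two parallel factorization problems (families $\alpha_i \colon pX_i \to Z$ with candidate vertex $pX$, versus families $\alpha_i \colon h_{X_i} \to h_p(Z)$ with candidate vertex $h_X$), use Yoneda I together with the definition of $h_p$ to match the families and the candidate factorizations on the two sides, and conclude from the definitions of \emph{continuous} and \emph{sheaf}. Where you genuinely add something is in the treatment of compatibility. The paper simply asserts that ``compatible families in both diagrams are in bijective correspondence'' as a consequence of Yoneda I and the definition of $h_p$; you correctly isolate this as the only non-formal point --- compatibility on the sheaf side is tested against probes coming from $\cc{C}$, while on the continuity side it is tested against arbitrary objects and arrows of $\cc{Z}$ --- and you supply the bridging argument: write $h_{X_i} \times_{h_X} h_{X_j}$ as a colimit of representables (Yoneda II) and apply $\widetilde{p}$, which is cocontinuous (Yoneda III) \emph{and exact by flatness}, to identify $pX_i \times_{pX} pX_j$ with $\colimite{(W,x,y)}{pW}$; since a colimit cone is jointly epimorphic, the two compatibility conditions then coincide. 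This is precisely where the hypothesis $p \in Flat[\cc{C},\cc{Z}]$ enters, and the paper's two-line proof never invokes it explicitly, so your version both fills in the step the paper elides and makes the role of flatness visible. Your observation that the easy implication ($\cc{Z}$-compatibility implies $\cc{C}$-compatibility, hence ``sheaf $\Rightarrow$ continuous'' needs no flatness) is also accurate.
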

\begin{proof}
Given a covering $X_i \mr{} X$, consider the diagrams:
$$
\xymatrix@C=6ex
   {
    pX_i \ar[r]_{} 
        \ar@/^4ex/[rr]^{\alpha_i}
  & pX \ar@{-->}[r]_{\alpha}
  & Z
   }    
\hspace{5ex}
\xymatrix@C=6ex
   {
    h_{X_i} \ar[r]_{} 
        \ar@/^4ex/[rr]^{\alpha_i}
  & h_X \ar@{-->}[r]_{\alpha}
  & h_p(Z)
   } 
$$
%
%

 By Yoneda's lemma \ref{yoneda1} and definition \ref{def_hp} of $h_p$, compatible families $\alpha_i$ in both diagrams are in bijective correspondence, and the same olds for the arrows $\alpha$.

The proof follows by definition of continuos for $p$ (left diagram) and that of sheaf for $h_p(Z)$ (right diagram)             
\end{proof} 
Consider now the full dense subcategories:
$$ContFlat[\cc{C},\, \cc{Z}]^{op} \subset Flat[\cc{C},\, \cc{Z}]^{op}, \hspace{1ex} and \hspace{2ex} 
TOP[\cc{Z},\,Sh(\cc{C})] \subset TOP[\cc{Z},\,\Eop{C}]$$
of continuos functors  and
 of morphisms whose direct image lands in the full subcategory of sheaves $Sh(\cc{C}) \subset \Eop{C}$.
 \begin{corollary} \label{preVI}
 The functor 
 $\ell, \; \ell(p) = (\widetilde{p},\,h_p,\, \varphi)$ in diagram \ref{prepreyoneda6} restricts to the subcategories and establish an equivalence:
$$
ContFlat[\cc{C}, \, \cc{Z}]^{op} 
\mr{\ell} 
TOP[\cc{Z},\, Sh(\cc{C})]\; 
$$
\end{corollary}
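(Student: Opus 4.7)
The plan is to observe that, since $ContFlat[\cc{C},\,\cc{Z}]^{op}$ and $TOP[\cc{Z},\,Sh(\cc{C})]$ are full subcategories of the categories already appearing in Lemma \ref{yoneda5} (reformulated as \eqref{prepreyoneda6}), and an equivalence is in hand on those ambient categories, it suffices to check that $\ell$ and its quasi-inverse $h^\ast$ each carry one subcategory into the other. The restriction is then automatically an equivalence, because the unit and counit of the ambient equivalence restrict to natural isomorphisms on the subcategories.

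First I would verify that $\ell$ sends $ContFlat[\cc{C},\,\cc{Z}]^{op}$ into $TOP[\cc{Z},\,Sh(\cc{C})]$. For $p$ continuous and flat, $\ell(p)=(\widetilde{p},\,h_p,\,\varphi)$, and we need $h_p(Z)\in Sh(\cc{C})$ for every $Z\in\cc{Z}$. This is precisely the ``only if'' direction of Proposition \ref{preyoneda6}. Conversely, I would verify that $h^\ast$ sends $TOP[\cc{Z},\,Sh(\cc{C})]$ back into $ContFlat[\cc{C},\,\cc{Z}]^{op}$. Given $u=(u^\ast,u_\ast,\varphi)$ with $u_\ast$ landing in $Sh(\cc{C})$, set $p=h^\ast(u)=u^\ast h$. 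The counit of the ambient equivalence supplies a natural isomorphism $\ell(p)\approx u$ in $TOP[\cc{Z},\,\Eop{C}]$, and hence natural isomorphisms $\widetilde{p}\approx u^\ast$ and $h_p\approx u_\ast$. From the first, $\widetilde{p}$ inherits exactness from $u^\ast$, so $p$ is flat. From the second, each $h_p(Z)$ is isomorphic in $\Eop{C}$ to the sheaf $u_\ast(Z)$; since being a sheaf is invariant under isomorphism of presheaves, $h_p(Z)$ is itself a sheaf, and Proposition \ref{preyoneda6} (in the ``if'' direction) delivers continuity of $p$.

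I do not anticipate any real obstacle: the substantive content is already packaged in Proposition \ref{preyoneda6} and Lemma \ref{yoneda5}, and the argument is purely the \emph{Yoga} of restricting an equivalence to full subcategories stable under the two functors. The only bookkeeping subtlety is that $\Eop{C}$ is in general illegitimate, but this plays no role since we only ever test the sheaf property on the specific presheaves $h_p(Z)$ and $u_\ast(Z)$ and invoke the natural isomorphism between them.
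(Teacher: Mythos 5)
Your proposal is correct and matches the paper's (largely implicit) argument: the paper states the corollary as an immediate consequence of Proposition \ref{preyoneda6} together with the fact that both subcategories are full in the ambient equivalence \eqref{prepreyoneda6}, which is exactly the two-directional check you carry out. Spelling out the $h^\ast$ direction via the counit isomorphism $h_p \approx u_\ast$ and invariance of the sheaf condition under isomorphism is precisely the bookkeeping the paper leaves to the reader.
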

This is not yet the end, what we want is to show that 
$\ell$ is actually a quasi-inverse of the functor 
$\varepsilon^*$ of precomposition with the canonical functor 
$\cc{C} \mr{\varepsilon} Sh(\cc{C})$, 
$\varepsilon = a\,h$, Definition \ref{canonico}.
We proceed as follows:

\vspace{1ex}

The proposition \ref{preyoneda6} shows that the adjunction $\widetilde{p} \dashv h_p$ actually determines an adjunction between the categories 
$\cc{Z}$ and the full subcategory $Sh(\cc{C})$,
 which we relabel $\varphi '$: $ \widetilde{p}\,' \dashv  h'_p$ to render things completely clear. We have the following diagrams, where $i$ is the inclusion of the full subcategory. The diagram on the right is the exterior diagram of the one on the left:
$$
\xymatrix@C=4ex@R=3ex
   { 
    \cc{C}  \ar[rr]^{h}
            \ar@/_2ex/[rrdd]_{p}
            \ar@/^5ex/[rrrr]_{\varepsilon}
   & {}     \ar@{}[dd]^(.35){\approx\,\eta}
   & \Eop{C}  \ar@{<-}@<1.5ex>[dd]^{h_p} 
              \ar@<-1.2ex>[dd]^{\!\widetilde{p}}
   &
   & {\hspace{1ex} 
     Sh(\cc{C})}  \ar@{<-}@<1.5ex>[ddll]^{h'_p} 
                  \ar@<-1.2ex>[ddll]^{\!\widetilde{p}\,'}
                  \ar@<1.5ex>[ll]_{i} 
                  \ar@{<-}@<-1.2ex>[ll]_{a}
   \\
   \\
   & {}
   & {\hspace{2ex} \cc{Z} \hspace{2ex}}
   }
\hspace{4ex}
\xymatrix@C=4ex@R=3ex
   { 
    {\cc{C}}  \ar[rr]^{\varepsilon}
          \ar@/_2ex/[rrdd]_{p}
   & {}   \ar@{}[dd]^(.35){\approx\,\eta'}
   & {Sh(\cc{C})}  \ar@{<-}@<1.5ex>[dd]^{h'_p} 
                   \ar@<-1.2ex>[dd]^{\!\widetilde{p}\,'}
   \\
   \\
   & {}
   & {\cc{Z}}
   }
$$
By definition $h_p = i \, h'_p$. Composing the adjoints we obtain $\widetilde{p} \approx \widetilde{p}\,' \, a$.  Then we define a natural isomorphism  
$p  \buildrel {\eta'} \over {\approx} \widetilde{p}\,'\,\varepsilon$ by the composite
\mbox{$p \buildrel {\eta} \over {\approx} \widetilde{p} \, h \approx  
\widetilde{p}\,'\,a\,h = \widetilde{p}\,'\,\varepsilon$.}
This proof done, we can safely suppress relabeling the adjunction $\widetilde{p} \dashv h_p$, 
then we write 
$p \buildrel {\eta} \over {\approx} \widetilde{p}\,\varepsilon$.

\vspace{1ex}

We prove now the following result (SGA4, IV, Proposition 4.9.4), that we call 
\emph{Yoneda VI}:

\begin{lemma}[Yoneda VI] \label{yoneda6}
For any topos $\cc{Z}$, 
the functor $\varepsilon^\ast$ of precomposition with $\varepsilon$,
establishes an equivalence of categories, with a quasi-inverse given by the cocontinuos extension
 $\widetilde{(-)}$:
$$
ContFlat[\cc{C}, \, \cc{Z}]^{op} 
\pmrl{\ell}{\varepsilon^\ast} 
TOP[\cc{Z},\, Sh(\cc{C})] 
$$
where 
$\ell(p) = (\widetilde{p},\,h_p,\, \varphi)$, see \ref{preVI}, and 
$\varepsilon^*(u^*,\, u_*,\, \varphi) = u^* \varepsilon$.
\end{lemma}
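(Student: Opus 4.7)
The plan is to exploit Corollary \ref{preVI}, which already establishes that $\ell$ is an equivalence of categories, and to upgrade $\varepsilon^\ast$ into a quasi-inverse of $\ell$. Since a quasi-inverse is unique up to natural isomorphism, it suffices to exhibit a natural isomorphism $\varepsilon^\ast\,\ell \approx \mathrm{id}$ on $ContFlat[\cc{C},\cc{Z}]^{op}$; then $\ell\,\varepsilon^\ast \approx \mathrm{id}$ follows automatically by composing with any quasi-inverse of $\ell$.

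First I have to check that $\varepsilon^\ast$ indeed takes values in $ContFlat[\cc{C},\cc{Z}]^{op}$. For a topos morphism $u=(u^*,u_*,\varphi)\in TOP[\cc{Z},Sh(\cc{C})]$, the functor $u^*\,\varepsilon:\cc{C}\to\cc{Z}$ is continuous because $\varepsilon$ sends covers to strict epimorphic families (Definition \ref{canonico}) and the inverse image $u^*$, being cocontinuous and exact, preserves strict epimorphic families. Flatness is obtained by identifying the cocontinuous extension of $u^*\,\varepsilon$ to $\Eop{C}$ with $u^*\,a$ via Yoneda III and the uniqueness of such extensions; both $u^*$ and $a$ are left exact (the latter by Theorem \ref{sheaf_ass}), so their composite is left exact as well.

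The natural isomorphism $\varepsilon^\ast\,\ell \approx \mathrm{id}$ is essentially assembled in the discussion immediately preceding the Lemma. Given a continuous flat $p$, Proposition \ref{preyoneda6} allows us to factor $h_p$ through $Sh(\cc{C})$, producing $\ell(p)=(\widetilde{p}\,',h'_p,\varphi')$ with $\widetilde{p}\approx\widetilde{p}\,'\,a$ and $h_p = i\,h'_p$. Applying $\varepsilon^\ast$ and using $\varepsilon = a\,h$ we get
\[
\varepsilon^\ast\,\ell(p) \;=\; \widetilde{p}\,'\,\varepsilon \;=\; \widetilde{p}\,'\,a\,h \;\approx\; \widetilde{p}\,h \;\buildrel {\eta} \over {\approx}\; p,
\]
with $\eta$ the isomorphism supplied by Yoneda III. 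Naturality in $p$ follows from naturality of $\eta$ and naturality of the composite-of-adjoints isomorphism $\widetilde{p}\approx\widetilde{p}\,'\,a$ in the variable $p$.

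With these two ingredients the conclusion is formal: if $q$ denotes the quasi-inverse of $\ell$ supplied by Corollary \ref{preVI}, then $\varepsilon^\ast \approx \varepsilon^\ast\,\ell\,q \approx q$, whence $\ell\,\varepsilon^\ast \approx \ell\,q \approx \mathrm{id}$. The main obstacle is bookkeeping rather than conceptual: one must keep track of which $\widetilde{p}$ and which $\eta$ (the $\Eop{C}$-version or the $Sh(\cc{C})$-version) is being invoked at each step, and correctly transport the adjunction data across the reflection $a\dashv i$. No new ideas are needed beyond what has already been set up.
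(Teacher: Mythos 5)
Your proposal is correct and follows essentially the same route as the paper: both rest on the isomorphism $p \buildrel {\eta} \over {\approx} \widetilde{p}\,'\,\varepsilon$ assembled in the discussion preceding the Lemma (giving $id \approx \varepsilon^\ast\,\ell$) and then conclude formally from the fact that $\ell$ is already an equivalence by Corollary \ref{preVI} --- you do this by naming a quasi-inverse $q$ and computing $\varepsilon^\ast \approx q$, while the paper applies $\ell$ and invokes essential surjectivity, which is the same argument in different clothing. Your explicit check that $\varepsilon^\ast$ actually lands in $ContFlat[\cc{C},\cc{Z}]^{op}$ (continuity via preservation of strict epimorphic families by the exact cocontinuous $u^*$, flatness via $\widetilde{u^*\varepsilon} \approx u^*a$) is a welcome piece of bookkeeping that the paper leaves implicit, but it does not change the approach.
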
 
\begin{proof}
In fact, given $p$, we already have 
$p \buildrel {\eta} \over {\approx} \widetilde{p}\,\varepsilon$, that is 
$id \buildrel {\eta} \over {\approx} \varepsilon^\ast\, \ell$. We apply $\ell$ and have 
$\ell \, \buildrel {\ell\,\eta} \over {\approx} \, \ell\,\varepsilon^\ast \ell$. But we know by \ref{preVI} that $\ell$ is, in particular, essentially surjective, thus it follows
$id \buildrel {} \over {\approx} 
\ell\,\varepsilon^\ast$.   
\end{proof}

From \ref{yoneda6} it follows a result that we will use in the next section:

\begin{proposition}[SGA4, IV, Remarque 1.3] \label{itself}
Let $\cc{Z}$ be a topos, then the canonical functor 
$\varepsilon: \cc{Z} \mr{\cong} Sh(\cc{Z})$ into the category of sheaves for the canonical pretopology is an equivalence.
\end{proposition}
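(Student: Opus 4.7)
The plan is to apply Lemma \ref{yoneda6} twice. First, I take $\cc{C} = \cc{Z}$ equipped with its canonical pretopology and target topos $\cc{Z}$ itself. The identity $id_{\cc{Z}}$ is exact, hence flat by Lemma \ref{yoneda7}, and trivially continuous, so $id_{\cc{Z}} \in ContFlat[\cc{Z},\cc{Z}]$. Since the topology is canonical, representable presheaves $h(Z)$ are already sheaves, so the direct image $h_{id}$ of $\ell(id_{\cc{Z}})$ is canonically identified with $\varepsilon = a\,h$, and $\ell(id_{\cc{Z}}) \approx (\widetilde{id},\,\varepsilon,\,\varphi) \in TOP[\cc{Z},\,Sh(\cc{Z})]$. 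Evaluating $\varepsilon^{\ast}\,\ell \approx id$ at $id_{\cc{Z}}$ produces the natural isomorphism $\widetilde{id} \circ \varepsilon \approx id_{\cc{Z}}$: the counit of the adjunction $\widetilde{id} \dashv \varepsilon$ is an isomorphism, which is equivalent to $\varepsilon$ being fully faithful and realises $\cc{Z}$ as a reflective subcategory of $Sh(\cc{Z})$ with reflector $\widetilde{id}$.

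Next, I apply Lemma \ref{yoneda6} again, this time with $\cc{C} = \cc{Z}$ and target topos $Sh(\cc{Z})$. The canonical functor $\varepsilon: \cc{Z} \to Sh(\cc{Z})$ is continuous by definition and flat, since it is exact (composite of the exact $a$ with the left-exact Yoneda $h$) and $\cc{Z}$ has finite limits (Lemma \ref{yoneda7}). Since $\varepsilon^{\ast}(id_{Sh(\cc{Z})}) = id \circ \varepsilon = \varepsilon$, the isomorphism $\ell\,\varepsilon^{\ast} \approx id$ yields $\ell(\varepsilon) \approx id_{Sh(\cc{Z})}$ in $TOP[Sh(\cc{Z}),\,Sh(\cc{Z})]$, so in particular $\widetilde{\varepsilon} \approx id_{Sh(\cc{Z})}$ as cocontinuous endofunctors of $Sh(\cc{Z})$.

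The main obstacle will be combining these two isomorphisms to establish essential surjectivity of $\varepsilon$, equivalently, that the unit $u: id_{Sh(\cc{Z})} \Rightarrow \varepsilon\,\widetilde{id}$ is an isomorphism. By the triangle identities for $\widetilde{id} \dashv \varepsilon$, the component $u_{\varepsilon(Z)}$ is already an isomorphism for every $Z \in \cc{Z}$, i.e., on every representable sheaf. For a general $F \in Sh(\cc{Z})$, the second application of Yoneda VI provides the canonical isomorphism $F \approx \widetilde{\varepsilon}(F) \approx \colimite{(x,X) \in \Gamma_F}{\varepsilon(X)}$ in $Sh(\cc{Z})$; applying the cocontinuous $\widetilde{id}$ and using $\widetilde{id}\,\varepsilon \approx id_{\cc{Z}}$ identifies $\widetilde{id}(F)$ with $\colimite{(x,X) \in \Gamma_F}{X}$ in $\cc{Z}$. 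Inspecting the naturality of $u$ along this colimit presentation, $u_F$ is expressed as the canonical assembly of the isomorphisms $u_{\varepsilon(X)}$, and hence is itself an isomorphism. This yields $\varepsilon\,\widetilde{id} \approx id_{Sh(\cc{Z})}$, and together with $\widetilde{id}\,\varepsilon \approx id_{\cc{Z}}$ it exhibits $\varepsilon$ and $\widetilde{id}$ as quasi-inverse equivalences.
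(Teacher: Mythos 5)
Your opening move is the same as the paper's: apply Lemma \ref{yoneda6} to the site $\cc{Z}$ with its canonical pretopology and to $id_{\cc{Z}} \in ContFlat[\cc{Z},\cc{Z}]$, identify the direct image $h_{id}$ with $\varepsilon$ (the topology is subcanonical, so $h_{id}(Z)=[-,Z]$ is a sheaf), and read off $\widetilde{id}\,\varepsilon \approx id_{\cc{Z}}$ from $\varepsilon^\ast\ell \approx id$, i.e.\ full faithfulness of $\varepsilon$. That part is fine. One structural objection, though: you invoke Lemma \ref{yoneda7} to get flatness of $id_{\cc{Z}}$ and of $\varepsilon$, but the paper's proof of \ref{yoneda7} explicitly uses Proposition \ref{itself} (the adjunction $a\dashv h$ in its main diagram \emph{is} this proposition for the target topos), so within this development that appeal is circular and you must justify flatness of $id_{\cc{Z}}$ by other means.

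The serious gap is in the last step. You correctly reduce essential surjectivity to invertibility of the unit $u: id_{Sh(\cc{Z})} \Rightarrow \varepsilon\,\widetilde{id}$, correctly note that $u_{\varepsilon(Z)}$ is invertible by the triangle identities, and correctly write $F \approx \colimite{\Gamma_F}{\varepsilon(X)}$ and $\widetilde{id}(F) \approx \colimite{\Gamma_F}{X}$. But the conclusion that $u_F$ is ``the canonical assembly of the isomorphisms $u_{\varepsilon(X)}$, hence an isomorphism'' requires the functor $\varepsilon\,\widetilde{id}$ to preserve the colimit $F = \colimite{\Gamma_F}{\varepsilon(X)}$, equivalently that $\varepsilon$ carries the colimit $\colimite{\Gamma_F}{X}$ computed in $\cc{Z}$ to a colimit in $Sh(\cc{Z})$. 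Naturality of $u$ only exhibits $u_F$ as the unique map out of the colimit $F$ compatible with the $u_{\varepsilon(X)}$; unless the cocone $\varepsilon(X) \to \varepsilon(\colimite{\Gamma_F}{X})$ is itself colimiting you cannot produce an inverse, and a natural transformation that is invertible on the vertices of a diagram need not be invertible on the colimit when the target functor does not preserve it. Here $\varepsilon = a\,h$ is a right adjoint composed after a left adjoint, and its cocontinuity is not formal --- it is essentially equivalent to the statement being proved (given full faithfulness, cocontinuity of $\varepsilon$ yields essential surjectivity at once, and conversely). This is exactly where the non-formal content of SGA4 IV 1.3 sits: one needs either the comparison lemma applied to a defining site of $\cc{Z}$, or the exactness (Giraud-type) properties of $\cc{Z}$ showing that every sheaf for the canonical topology is representable. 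Your argument does not supply that input, so the proof as written does not close.
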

\begin{proof}
Considering in \ref{yoneda6} the case $\cc{C} = \cc{Z}$ with the canonical pretopology:
$$
ContFlat[\cc{Z}, \, \cc{Z}]^{op} 
\pmrl{\ell}{\varepsilon^\ast} 
TOP[\cc{Z},\, Sh(\cc{Z})]\; 
(= ELA[Sh(\cc{Z}),\, \cc{Z}]^{op}),
$$
a straightforward computation (note that in this case since the topology is subcanonical $\varepsilon$ is full and faithful) shows that 
$\varepsilon$ and $\ell(id)$ establish an equivalence 
$
\cc{Z} 
\pmrl{\varepsilon}{\ell(id)} 
Sh(\cc{Z})
$. 
Note that $\ell(id) = \widetilde{id}$, is the continuos extension of the identity functor.
\end{proof}

\section{Yoneda VII: Flatness for sites with finite limits} \label{sec_yoneda7} 

\begin{lemma}[Yoneda VII] \label{yoneda7} 
If a category $\cc{C}$ has finite limits, a functor 
$\cc{C} \mr{p} \cc{Z}$ into a topos $\cc{Z}$ is flat provided it is exact.
\end{lemma}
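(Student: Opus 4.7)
The plan is to split the proof in two, as the introduction advertises: first treat set-valued functors by direct category theory, then bootstrap to functors into an arbitrary topos $\cc{Z}$ using that every topos embeds in an ambient presheaf category (with pointwise limits) and the uniqueness half of Yoneda III.

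For the set-valued case, assume $\cc{C}$ is finitely complete and $\cc{C} \mr{p} \cc{E}ns$ preserves finite limits. The key claim is that the \emph{covariant} category of elements $\int_{\cc{C}} p$ --- objects $(X, x)$ with $x \in p(X)$, and arrows $(X,x) \mr{f} (Y,y)$ those $X \mr{f} Y$ in $\cc{C}$ with $p(f)(x) = y$ --- is \emph{cofiltered}. Given two objects $(X_1, x_1), (X_2, x_2)$, the element $(x_1, x_2) \in p(X_1) \times p(X_2) = p(X_1 \times X_2)$ supplies a common predecessor via the projections; given parallel $f, g : (Y,y) \rightrightarrows (X,x)$, the equalizer $E \hookrightarrow Y$ in $\cc{C}$ satisfies $p(E) = \{a \in p(Y) \mid p(f)(a) = p(g)(a)\}$ and so contains $y$; the terminal object of $\cc{C}$ handles the remaining axiom. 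Applying the covariant analog of Yoneda II, $p$ is thus a filtered colimit of covariant representables, and the usual coend computation produces an isomorphism $\widetilde{p}(H) \approx \mathrm{colim}_{(X,x) \in (\int_{\cc{C}} p)^{op}} H(X)$. Since finite limits in $\Eop{C}$ are pointwise and filtered colimits in $\cc{E}ns$ commute with finite limits, $\widetilde{p}$ is left exact, so $p$ is flat.

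For the bootstrap, write $\cc{Z} \simeq Sh(\cc{D})$ for some site $\cc{D}$ (Proposition \ref{itself} even permits $\cc{D} = \cc{Z}$ with its canonical pretopology). For each $D \in \cc{D}$, let $p_D = ev_D \circ i \circ p : \cc{C} \mr{} \cc{E}ns$, where $i : Sh(\cc{D}) \mono \Eop{D}$ is the inclusion. Both $i$ (right adjoint to the associated-sheaf functor, by Theorem \ref{sheaf_ass}) and $ev_D$ preserve finite limits, so $p_D$ inherits exactness from $p$. The set-valued case just proved says $\widetilde{p_D}$ is exact. On the other hand, the composite $ev_D \circ i \circ \widetilde{p} : \Eop{C} \mr{} \cc{E}ns$ is cocontinuous and restricts along $h$ to $p_D$, so by the uniqueness in Yoneda III it coincides with $\widetilde{p_D}$. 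Since the family $\{ev_D \circ i\}_{D \in \cc{D}}$ jointly reflects limits in $\cc{Z}$, we conclude $\widetilde{p}$ is left exact, i.e.\ $p$ is flat.

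The step I expect to be most delicate is showing that $\int_{\cc{C}} p$ is cofiltered in the set-valued case: this is where the hypotheses of finite completeness on $\cc{C}$ and exactness of $p$ are genuinely consumed, and one must also correctly convert the Yoneda II picture of $p$ as a colimit of representables into a \emph{filtered} colimit. By comparison, the passage to a general topos is formal, relying only on two ingredients already in hand: finite limits in $Sh(\cc{D})$ are reflected jointly by evaluations, and cocontinuous extensions are unique up to natural isomorphism by Yoneda III.
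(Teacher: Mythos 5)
Your set-valued half is correct and is essentially the paper's own argument: cofilteredness of the category of elements of $p$ (using products, equalizers and the terminal object of $\cc{C}$ together with exactness of $p$), the presentation $\widetilde{p}(H) \approx \colimite{(x,X)}{H(X)}$ as a filtered colimit of evaluations, and commutation of filtered colimits with finite limits in $\cc{E}ns$.

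The bootstrap to a general topos, however, has a genuine gap at its key step. You assert that $ev_D \circ i \circ \widetilde{p}$ is cocontinuous and therefore, by the uniqueness of cocontinuous extensions (Yoneda III), isomorphic to $\widetilde{p_D}$. But $i : Sh(\cc{D}) \hookrightarrow \Eop{D}$ is a \emph{right} adjoint, as you yourself note; it preserves limits, not colimits (colimits of sheaves are obtained by sheafifying the presheaf colimit). Hence $ev_D \circ i \circ \widetilde{p}$ is not cocontinuous, and the identification with $\widetilde{p_D}$ fails. Concretely, take $\cc{C}=1$ and $p$ the terminal object of $\cc{Z}$: then $\widetilde{p_D}(S)=S$ for a set $S$, whereas $ev_D(i(\widetilde{p}(S)))$ is the set of sections over $D$ of the constant sheaf at $S$, which is in general strictly larger. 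The paper avoids this trap by never composing $\widetilde{p}$ with $i$: it extends $h\,p : \cc{C} \mr{} \Eop{Z}$ (the Yoneda embedding of $\cc{Z}$ into its \emph{presheaves}) to a cocontinuous $\widetilde{hp} : \Eop{C} \mr{} \Eop{Z}$, identifies $ev_Z\,\widetilde{hp} \approx \widetilde{p^Z}$ --- legitimate because $ev_Z$ on presheaves is cocontinuous --- so that $\widetilde{hp}$ is exact pointwise, and then recovers $\widetilde{p} \approx a\,\widetilde{hp}$ where $a : \Eop{Z} \mr{} \cc{Z}$ is the exact reflection supplied by Proposition \ref{itself}. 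Your argument can be repaired along these lines, but as written the crucial identification $ev_D \circ i \circ \widetilde{p} \approx \widetilde{p_D}$ is false.
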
 
\begin{proof}
First we prove the case $\cc{Z} = \cc{E}ns$, and then we reduce the general case to this one.
Let $\cc{C} \mr{p} \cc{E}ns$, the following facts are easily obtained with basic category theory:

\vspace{1ex}

{\bf 1.} In $\cc{E}ns$ filtered colimits commute with finite limits (just check the canonical constructions). 

{\bf 2.} It follows immediately from {\bf 1.} that filtered colimits of exact functors are exact.

{\bf 3.} If $\cc{C}$ has finite limits and $p$ is exact, then the category 
$\Gamma_p$ is cofiltered (immediate, $\Gamma_p$ has finite colimits).

{\bf 4.} The continuos extension of a representable functor
$\cc{C} \mr{[X,\, -]} \cc{E}ns$ is 
$\Eop{C} \mr{ev_X} \cc{E}ns$, and therefore is exact.

{\bf 5.} Note that by \ref{yoneda3} 
$\;\widehat{(p)}\,$ as a functor in the variable $p$, preserves colimits.

\vspace{1ex}

Putting all this together, from the dual case of \ref{yoneda2} (Yoneda II) it follows that 
\emph{exact implies flat} for set-valued functors.

\vspace{1ex}

We pass now to the general case.
We have the following diagram:

$$ 
\xymatrix@C=4.5ex@R=4.5ex
   {
    {\cc{C}}  \ar[rr]^{h}  
              \ar[rdd]_{p} 
              \ar@/_8ex/[ddddrr]_{p^Z} 
              \ar@{}[drr]_{\approx_1}
   && {\Eop{C}} \ar[ldd]^{\hspace{-1ex}\widetilde{p}} 
                \ar[rdd]^{\widetilde{hp}} 
                \ar@/^21ex/[dddd]^{\widetilde{p^Z}}
                \ar@{.}@/^16ex/[dddd]^{\approx_3}
                \ar@{}[dd]^{\approx_2} 
   \\
   && {}
   \\ 
   & {\cc{Z}} \ar@<-1ex>[rr]_{h} 
              \ar[rdd]_{[Z,\, -]} 
   & {}       \ar@{}[dd]^{\approx_4}
   & {\Eop{Z}} \ar[ldd]^{ev_Z}
               \ar@<-1ex>@/0ex/[ll]_{a}
   \\
   \\
   && {\cc{E}ns}
   }
$$

\vspace{1ex}

%
%

The arrow $p^Z$ is defined to be the composite 
$[Z,\, -]\,p = [Z,\, p(-)]$, and the adjoint pair 
$\;a \dashv h\;$, $id \approx a \, h$, corresponds to the fact that there is an equivalence $\cc{Z} \simeq Sh(\cc{Z})$ for the canonical topology on $\cc{Z}$, Proposition  \ref{itself}. 

\vspace{1ex}

Clearly $p^Z$ is exact, so by the case $\cc{Z} = \cc{E}ns$ we have $\widetilde{p^Z}$ exact. Then by $\approx_3$ it follows that $\widetilde{hp}$ is exact. Since $a$ also is exact, $\approx_2$ shows that $\widetilde{p}$ is exact, and the proof finishes.  

\vspace{1ex}

We pass now to show the existence of the natural isomorphisms "$\approx_i$", $i = 1,\,2,\,3,\,4$ and 
label $\approx_5$ the exterior diagram.

We have $\approx_1$ and  $\approx_5$ by definition of the cocontinuos extension, and $\approx_4$ is actually a strict commutativity. 

$\widetilde{p} \approx_2 a \, \widetilde{hp}$: since the three functors are continuos, it is enough \mbox{(by Yoneda II)} to show 
$\widetilde{p}\,h \approx a \, \widetilde{hp} \, h$. In fact:   
$\widetilde{p}\,h \approx p \approx a\,h\,p 
\approx  a \, \widetilde{hp} \, h$.

$ev_Z \, \widetilde{hp}  \approx_3 \widetilde{p^Z}$: as before, it is enough to show 
$ev_Z \, \widetilde{hp} \, h  \approx \widetilde{p^Z} \, h$. In fact:
$ev_Z \, \widetilde{hp} \, h  \approx ev_Z \, h \, p 
\approx  [Z, \, -] \, p  = p^Z  \approx  \widetilde{p^Z} \, h$.
\end{proof}


\end{document}